\def\version{July 13, 2012}
\providecommand{\eqref}[1]{{\rm (\ref{#1})}}
\DeclareSymbolFont{EUR}{U}{eur}{m}{n}
\DeclareSymbolFontAlphabet{\eur}{EUR}
\DeclareSymbolFont{EUB}{U}{eur}{b}{n}
\DeclareSymbolFontAlphabet{\eub}{EUB}
\DeclareSymbolFont{AMSb}{U}{msb}{m}{n}
\DeclareSymbolFontAlphabet{\mathbb}{AMSb}
\newcommand{\eubJ}{\eub{J}}
\newcommand{\eubL}{\eub{L}}
\newcommand{\eurL}{\eur{L}}
\newcommand{\notyet}[1]{{}}
\newcommand{\range}{\mathop{\rm Range\,}}
\newcommand{\supp}{\mathop{\rm supp}}
\newcommand{\p}{\partial}
\newcommand{\at}[1]{\vert\sb{\sb{#1}}}
\newcommand{\At}[1]{\biggr\vert\sb{\sb{#1}}}
\def\R{\mathbb{R}}
\newcommand{\C}{\mathbb{C}}
\newcommand{\N}{\mathbb{N}}
\newcommand{\abs}[1]{\vert #1 \vert}
\newcommand{\norm}[1]{\Vert #1 \Vert}
\newcommand{\sothat}{{\rm ;}\ }
\DeclareMathSymbol{\varGamma}{\mathord}{letters}{"00}
\DeclareMathSymbol{\varDelta}{\mathord}{letters}{"01}
\DeclareMathSymbol{\varTheta}{\mathord}{letters}{"02}
\DeclareMathSymbol{\varLambda}{\mathord}{letters}{"03}
\DeclareMathSymbol{\varXi}{\mathord}{letters}{"04}
\DeclareMathSymbol{\varPi}{\mathord}{letters}{"05}
\DeclareMathSymbol{\varSigma}{\mathord}{letters}{"06}
\DeclareMathSymbol{\varUpsilon}{\mathord}{letters}{"07}
\DeclareMathSymbol{\varPhi}{\mathord}{letters}{"08}
\DeclareMathSymbol{\varPsi}{\mathord}{letters}{"09}
\DeclareMathSymbol{\varOmega}{\mathord}{letters}{"0A}
\theoremstyle{plain}
\newtheorem{lemma}{Lemma}[section]
\newtheorem{theorem}[lemma]{Theorem}
\newtheorem{proposition}[lemma]{Proposition}
\theoremstyle{definition}
\newtheorem{definition}[lemma]{Definition}
\theoremstyle{remark}
\newtheorem{remark}[lemma]{Remark}
\makeatletter\@addtoreset{equation}{section}
\makeatletter\@addtoreset{lemma}{section}
\def\spec{\sigma}
\renewcommand{\Re}{\mathop{\rm{R\hskip -1pt e}}\nolimits}
\renewcommand{\Im}{\mathop{\rm{I\hskip -1pt m}}\nolimits}
\begin{document}

\title{
Linear instability
of nonlinear Dirac equation in 1D
with higher order nonlinearity
}

\author{
{\sc Andrew Comech}
\\
{\it\small Texas A\&M University, College Station, TX 77843, U.S.A.}
\\
{\it\small Institute for Information Transmission Problems,
Moscow 101447, Russia}
}

\date{\version}

\maketitle

\begin{abstract}
We consider the nonlinear Dirac equation
in one dimension,
also known as
the Soler model in (1+1) dimensions,
or the massive Gross-Neveu model:
\[
i\p\sb t\psi=-i\alpha\p\sb x\psi
+m\beta\psi-f(\psi\sp\ast\beta\psi)\beta\psi,
\qquad
\psi(x,t)\in\C^2,
\quad
x\in\R;
\qquad
g\in C\sp\infty(\R),
\quad
m>0,
\]
where $\alpha$, $\beta$
are $2\times 2$ hermitian matrices
which satisfy
$\alpha^2=\beta^2=1$,
$\alpha\beta+\beta\alpha=0$.
We study the spectral stability of solitary wave
solutions $\phi\sb\omega(x)e^{-i\omega t}$.
More precisely, 
we study
the presence of point eigenvalues
in the spectra
of linearizations at solitary waves of arbitrarily small amplitude,
in the limit $\omega\to m$.
We prove that
if $f(s)=s^k+O(s^{k+1})$, $k\in\N$,
with $k\ge 3$,
then
one positive and one negative eigenvalue
are present in the spectrum of linearizations
at all solitary waves with $\omega$ sufficiently close
to $\omega=m$.
This shows that all solitary waves
of sufficiently small amplitude
are linearly unstable.
The approach is based
on applying the Rayleigh-Schr\"odinger perturbation
theory to the nonrelativistic limit of the equation.

The results are in formal agreement with the
Vakhitov-Kolokolov stability criterion.

Let us mention a similar independent result \cite{guan-gustafson-2010}
on linear instability
for the nonlinear Dirac equation in three dimensions,
with cubic nonlinearity
(this result is also in formal agreement
with the Vakhitov-Kolokolov stability criterion).
\end{abstract}

\section{Introduction}

A natural simplification
of the Dirac-Maxwell system
\cite{MR0190520}
is the nonlinear Dirac equation,
such as
the massive Thirring model \cite{MR0091788}
with vector-vector self-interaction
and
the Soler model \cite{PhysRevD.1.2766}
with scalar-scalar self-interaction
(known in dimension $n=1$
as the massive Gross-Neveu model
\cite{PhysRevD.10.3235,PhysRevD.12.3880}).
There was an enormous body of research
devoted to the nonlinear Dirac equation,
which we can not cover comprehensively in this short note.
The existence of standing waves in the nonlinear Dirac equation was
studied in
\cite{PhysRevD.1.2766},
\cite{MR847126}, \cite{MR949625}, and \cite{MR1344729}.
Numerical confirmation
of spectral stability
of solitary waves of small amplitude
is contained in \cite{chugunova-thesis}.
The overview of the well-posedness of the nonlinear
Dirac equation in 1D
is contained in \cite{2010arXiv1011.5925P}.
The asymptotic stability of small amplitude solitary waves
in the external potential
has been studied in \cite{MR2466169,2010arXiv1008.4514P}.

These models
with self-interaction of local type
have been
receiving a lot of attention in the
particle physics,
as well as in the theory of Bose-Einstein
condensates \cite{MR2542748,PhysRevLett.104.073603}.
The question of stability
of solitary waves
is of utmost importance:
perturbations ensure
that we only ever encounter stable configurations.
Recent attempts at asymptotic stability
\cite{2010arXiv1008.4514P,2011arXiv1103.4452B}
rely on the fundamental question
of \emph{spectral stability}:

\medskip
\noindent

\begin{verse}
{\it
Consider the Ansatz $\psi(x,t)=\big(\phi\sb\omega(x)+\rho(x,t)\big)e^{-i\omega t}$,
with $\phi\sb\omega(x)e^{-i\omega t}$ a solitary wave solution.

Let $\p\sb t\rho=A\sb\omega\rho$ be the linearized equation on $\rho$.
Does
$A\sb\omega$ have
eigenvalues
in the right half-plane?
}
\end{verse}

\medskip


In spite of a very clear picture
of the spectral stability of
nonlinear Schr\"odinger and Klein-Gordon equations
\cite{VaKo,MR723756,MR783974,MR804458,MR901236}
and 
many attempts at the spectral stability
in the context of the nonlinear Dirac
(let us mention
\cite{as1983,as1986,MR848095,MR2466169,PhysRevE.82.036604}),
in the latter case
the question
of spectral stability
is still completely open.
Our numerical results
\cite{dirac-1d-arxiv}
show that
in the 1D Soler model (cubic nonlinearity)
all solitary waves are spectrally stable.
Let us mention the related results
\cite{MR2217129,chugunova-thesis}.

Our previous result
\cite{dirac-vk-arxiv}
shows that the Vakhitov-Kolokolov criterion
in the case of the nonlinear Dirac equation
gives a less definite answer
about the spectral stability
than in the case of the nonlinear Schr\"odinger equation.
All we know is that
when
$\p\sb\omega Q(\omega)=0$,
with $Q(\omega)$ being the charge
of the solitary wave 
$\phi\sb\omega e^{-i\omega t}$,
two eigenvalues collide at $\lambda=0$,
but we do not know where these eigenvalues
are located when $\p\sb\omega Q(\omega)\ne 0$.
On the other hand,
for the solutions
to the nonlinear Schr\"odinger equation,
the condition $\p\sb\omega Q(\omega)>0$
is enough to conclude that
one positive and one negative eigenvalues
move out of $\lambda=0$
along the real axis;
see \cite{VaKo,MR901236,MR1995870}.



In the present paper, we show that
if the Vakhitov-Kolokolov guarantees linear
instability
for the system obtained in the nonrelativistic limit,
then the same result is true for the
small amplitude solitary waves in the original relativistic system.
We use our approach to show that the small amplitude
solitary wave solutions to the nonlinear Dirac equation
in 1D with higher order nonlinearities
are linearly unstable.

According to our results,
the spectrum of the linearization
at small amplitude solitary waves
in 1D nonlinear Dirac equation
with the cubic and quintic nonlinearities
has no real eigenvalues;
instead,
one can prove the existence of two purely imaginary eigenvalues.
To prove that these solitary waves are spectrally stable,
one also needs to prove that there are no complex
eigenvalues with $\Re\lambda>0$;
we already have partial results
which we will publish elsewhere.

\medskip

Our approach is based on the idea that
the family of real eigenvalues
of the linearization of the nonlinear Dirac equation
bifurcating from $\lambda=0$
is a deformed family of eigenvalues
of the linearization of the nonlinear Schr\"odinger equation.
The model and the main results
are described in Section~\ref{sect-results}.
The necessary constructions
in the context of the nonlinear Schr\"odinger equation
are presented in Section~\ref{sect-nls}.
The 
asymptotics of solitary waves
of the nonlinear Dirac equation
and the linearization 
is covered in Section~\ref{sect-linearization}.
The statement of Theorem~\ref{main-theorem-dirac-vbk}
for $\omega\lesssim m$
follows from Proposition~\ref{prop-k3},
which we prove using the Rayleigh-Schr\"odinger perturbation theory.

\medskip

\noindent
{ACKNOWLEDGMENTS.}
The author is grateful
to
Gregory Berkolaiko,
Nabile Boussa\"id,
Maria Esteban,
Dmitry Pelinovsky,
Iosif Polterovich,
Bj\"orn Sandstede,
Walter Strauss,
Boris Vainberg,
and Michael Weinstein
for most helpful discussions.
The author is grateful to
Stephen Gustafson
for the preprint \cite{guan-gustafson-2010}
with a similar result
for the nonlinear Dirac equation in 3D.

\section{Main result}
\label{sect-results}

We consider the nonlinear Dirac equation in
one dimension,
\begin{equation}\label{nld-1d}
i\p\sb t\psi=D\sb m\psi
-f(\psi\sp\ast\beta\psi)\beta\psi, \qquad x\in\R,
\qquad
\psi\in\mathbb{C}^2,
\end{equation}
where $D\sb m$ is the
Dirac operator:
\[
D\sb m=-i\alpha\p\sb{x}+m\beta,
\qquad
m>0.
\]
Above,
$\psi\sp\ast$
being the Hermitian conjugate of $\psi$.
We assume that the nonlinearity $f(s)$
is smooth, real-valued,
and satisfies $f(0)=0$.
The Hermitian matrices $\alpha$ and $\beta$
are chosen so that
\[
D\sb m^2=\big(-i\alpha\p\sb{x}+\beta m\big)^2=(-\p\sb x^2+m^2)I\sb{2},
\]
where $I\sb{2}$ is the $2\times 2$ unit matrix.
That is, $\alpha$ and $\beta$ are to satisfy
\begin{equation}
\label{eq:def-Dirac-matrices}
\alpha^2=I\sb{2},
\qquad
\beta^2=I\sb{2};
\qquad
\alpha\beta+\beta\alpha=0.
\end{equation}
The generalized massive Gross-Neveu model,
or, in the terminology of \cite{PhysRevE.82.036604},
the scalar-scalar case with $k\in\N$,
corresponds to the nonlinearity $f(s)=s^k$.

According to the Dirac-Pauli theorem
(cf. \cite{dirac-1928-616,vanderwaerden-1932,Pauli}
and \cite[Lemma 2.25]{thaller}),
the particular choice of $\alpha$ and $\beta$ matrices
does not matter.
We choose
\[
\alpha =-\sigma\sb 2 
= \left(\begin{matrix}0&i\\-i&0\end{matrix}\right),
\qquad
\beta=\sigma\sb 3
= \left(\begin{matrix}1&0\\0&-1\end{matrix}\right).
\]

For a large class of nonlinearities $f(s)$,
there are solitary wave solutions
of the form
\begin{equation}\label{sw}
\psi(x,t)=\phi\sb\omega(x)e^{-i\omega t},
\qquad
\phi\sb\omega
=\begin{bmatrix}v(\cdot,\omega)\\u(\cdot,\omega)\end{bmatrix}
\in H\sp 1(\R,\C^2),
\qquad
\abs{\omega}<m,
\end{equation}
with $v(x,\omega)$ and $u(x,\omega)$
real-valued
and with finite $H\sp 1$ norms.
For example, this is the case for the Soler model
with the nonlinearity $f(s)=s$.
For details, see Section~\ref{sect-solitary-waves-1d}.

Due to the $\mathbf{U}(1)$-invariance,
for solutions to \eqref{nld-1d}
the value of the charge functional
\[
Q(\psi)=\int\sb{\R}\abs{\psi(x,t)}^2\,dx
\]
is formally conserved.
For brevity,
we also denote by $Q(\omega)$
the charge of the solitary wave
$\phi\sb\omega(x)e^{-i\omega t}$:
\begin{equation}\label{def-q-omega}
Q(\omega)=\int\sb{\R}\abs{\phi\sb\omega(x)}^2\,dx.
\end{equation}

We are interested in the spectral stability
of linearization
of \eqref{nld-1d}
at a solitary wave solution \eqref{sw}.


\begin{theorem}\label{main-theorem-dirac-vbk}
Let
\[
f(s)=a s^k+O(s^{k+1}),
\qquad
a>0.
\]
If $k\ge 3$,
then
the solitary wave solutions
$\phi\sb\omega e^{-i\omega t}$
to \eqref{nld-1d}
are linearly unstable
for $\omega\in I$,
where $I\subset\Omega$ is the largest interval
with $\supp I=m$
such that $\p\sb\omega Q(\omega)$ does not vanish on $I$.
More precisely,
let $A\sb\omega$ be the linearization
of the nonlinear Dirac equation
at a solitary wave $\phi\sb\omega(x)e^{-i\omega t}$.
Then there are eigenvalues
$\pm\lambda\sb\omega\in\sigma\sb{p}(A\sb\omega)$,
with $\lambda\sb\omega>0$ for $\omega\in I$,
with $\lambda\sb\omega=O(m-\omega)$.
\end{theorem}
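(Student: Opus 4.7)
The strategy I would pursue is dictated by the paper's introduction: identify the eigenvalues of $A\sb\omega$ bifurcating from $\lambda=0$ as $\omega\to m$ with eigenvalues of the linearization $A\sp{\rm NLS}$ of the nonlinear Schr\"odinger equation obtained in the nonrelativistic limit, and invoke the Vakhitov--Kolokolov criterion for NLS to produce a pair of real eigenvalues $\pm\mu$ with $\mu>0$. Specifically, the limiting NLS has pure-power nonlinearity $a|\psi|\sp{2k}\psi$, which in one spatial dimension is $L\sp 2$-supercritical exactly when $2k+1>5$, i.e.\ when $k\ge 3$; in that case the standard computation gives $\p\sb\omega Q\sp{\rm NLS}(\omega)<0$, and $A\sp{\rm NLS}$ possesses a pair of simple real eigenvalues $\pm\mu$, $\mu>0$, isolated from the rest of its spectrum. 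This is the content of Section~\ref{sect-nls}.

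I would then set up the perturbative relation between $A\sb\omega$ and $A\sp{\rm NLS}$ using the small-amplitude asymptotics of Section~\ref{sect-linearization}. Writing $\epsilon=\sqrt{m\sp 2-\omega\sp 2}$ and rescaling $x=y/\epsilon$, the upper spinor component $v$ behaves like $\epsilon\sp{1/k}V(y)$ with $V$ the NLS soliton profile, while the lower component $u$ is smaller by one power of $\epsilon$. In these rescaled variables and after factoring out the overall scale $\epsilon\sp 2$, the operator $A\sb\omega$ takes the form $A\sp{\rm NLS}+\epsilon R\sb\epsilon$, where the lower-component degree of freedom has been eliminated by a Schur-complement-type reduction against $D\sb m-\omega$, and where $R\sb\epsilon$ remains uniformly bounded (relatively to $A\sp{\rm NLS}$) as $\epsilon\to 0$. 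Standard Rayleigh--Schr\"odinger perturbation theory applied to the isolated simple eigenvalues $\pm\mu$ of $A\sp{\rm NLS}$ then produces eigenvalues $\pm\widetilde\lambda(\epsilon)=\pm\mu+O(\epsilon)$ of the perturbed operator, and scaling back yields eigenvalues $\pm\lambda\sb\omega=\pm\epsilon\sp 2\widetilde\lambda(\epsilon)$ of $A\sb\omega$ satisfying $\lambda\sb\omega>0$ and $\lambda\sb\omega=O(m-\omega)$; this is the conclusion of the theorem for $\omega\lesssim m$, and the content of Proposition~\ref{prop-k3}. Extension to the full interval $I$ is a continuity argument: the real eigenvalue $\lambda\sb\omega$ can leave the real axis only by collision, and collision at $\lambda=0$ is excluded throughout $I$ by the identity of \cite{dirac-vk-arxiv} relating vanishing of $\p\sb\omega Q(\omega)$ to eigenvalue collisions at the origin.

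The main obstacle is making the perturbation rigorous in a context where the nonrelativistic limit is genuinely singular: the spinor operator $A\sb\omega$ and the scalar operator $A\sp{\rm NLS}$ act on different underlying spaces, and the first-order Dirac part of $A\sb\omega$ dominates on the soliton scale. One must therefore combine a spatial rescaling with an elimination of the lower spinor component to arrive at a perturbation problem on a common scalar Hilbert space, and then verify that the remainder $R\sb\epsilon$ is $A\sp{\rm NLS}$-bounded with a small relative bound uniformly in $\epsilon$, so that the isolated simple eigenvalues $\pm\mu$ do indeed persist and depend continuously on $\epsilon$. Checking this uniform estimate, together with the fact that the scalar operator produced by the Schur reduction really has $A\sp{\rm NLS}$ as its leading part, is the technical heart of Proposition~\ref{prop-k3}.
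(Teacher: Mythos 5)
Your proposal is correct and follows essentially the same route as the paper: nonrelativistic rescaling, the Vakhitov--Kolokolov computation for the limiting NLS linearization (Proposition~\ref{prop-a-hat}) producing a pair of real eigenvalues $\pm\Lambda$ when $k\ge 3$, persistence of these eigenvalues under perturbation via a Rayleigh--Schr\"odinger/contraction-mapping argument (Proposition~\ref{prop-k3}), and continuation over all of $I$ via Remark~\ref{remark-enough} and \cite{dirac-vk-arxiv}. Two presentational differences are worth noting: the paper does not perform a Schur-complement reduction to a scalar problem but keeps the full rescaled $4\times 4$ first-order system, in which the correction $W$ to the limiting operator $\eub{A}_\Lambda$ is a mere multiplication operator with $\norm{W}_{L^\infty}=O(\epsilon^{2/k})$ (Lemma~\ref{lemma-bounds-w}), which sidesteps the uniform relative-boundedness issue you single out as the technical heart; and your sign $\p_\omega Q^{\mathrm{NLS}}(\omega)<0$ is the $e^{+i\omega t}$ convention, whereas in the paper's $e^{-i\omega t}$ convention one has $Q'(\omega)>0$ for $k\ge 3$ and instability corresponds to $Q'>0$ (see \eqref{qp} and Lemma~\ref{lemma-vk}).
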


See Figure~\ref{fig-spectrum}.

\bigskip

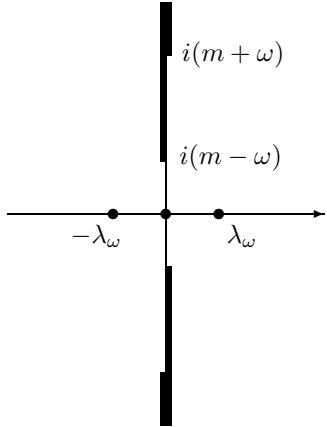
\begin{figure}[ht]
\begin{center}
\setlength{\unitlength}{1pt}
\begin{picture}(0,180)(0,-90)
\font\gnuplot=cmr10 at 10pt
\gnuplot

\put(6,58){$i(m+{\omega})$}
\put(5,19){$i(m-{\omega})$}

\put( 23,-11){$\lambda\sb\omega$}
\put(-36,-11){$-\lambda\sb\omega$}

\put(-20,0){\circle*{4}}
\put( 20,0){\circle*{4}}
\put(  0,0){\circle*{4}}

\put(-60,  0){\vector(1,0){120}}
\put(  0,-80){\vector(0,1){160}}

\linethickness{2pt}
\put( 1,-80){\line(0,1){60}}
\put(-1,-80){\line(0,1){20}}
\put( 1,80){\line(0,-1){20}}
\put(-1,80){\line(0,-1){60}}

\end{picture}
\end{center}
\caption{
\small
Main result:
The point spectrum of the linearization
of the nonlinear Dirac equation
with the nonlinearity
$f(s)=s^k+O(k^{k+1})$, $k\ge 3$,
at a solitary wave
with $\omega\lesssim m$
contains two nonzero real eigenvalues,
$\pm\lambda\sb\omega$,
with $\lambda\sb\omega=O(m-\omega)$.
See Theorem~\ref{main-theorem-dirac-vbk}.
Also plotted on this picture
is the essential spectrum,
with the edges
at $\lambda=\pm i(m-{\omega})$
and with the embedded threshold points
(branch points of the dispersive relation)
at $\lambda=\pm i(m+{\omega})$.
}

\label{fig-spectrum}

\end{figure}

\begin{remark}\label{remark-enough}
We only need to prove the linear instability
for $\omega\lesssim m$.
Then, by \cite{dirac-vk-arxiv},
the positive and negative eigenvalues
remain trapped on the real axis,
not being able
to collide at $\lambda=0$
for $\omega\in I=(\omega\sb 0,m)$
as long as $\p\sb\omega Q(\omega)$
does not vanish on $I$
(this nonvanishing is a sufficient condition
for the dimension of the generalized
null space to remain equal to two).
These eigenvalues can not leave into the complex
plane, either, since they are simple,
while the spectrum of the operator
is symmetric with respect to the real
and imaginary axes.
\end{remark}

\begin{remark}
We do not know what happens at $\omega=\inf I$:
as $\omega$ drops below $\inf I$,
it could be that either the pair of real
eigenvalues, having collided at $\lambda=0$,
turn into a pair of purely imaginary eigenvalues
(linear instability disappears),
or instead
two purely imaginary eigenvalues,
having met at $\lambda=0$,
turn into the
second  pair of real eigenvalues
(linear instability persists).
\end{remark}

\begin{remark}
Theorem~\ref{main-theorem-dirac-vbk}
is in the formal agreement with the Vakhitov-Kolokolov
stability criterion \cite{VaKo},
since
for $\omega\lesssim m$
one has $Q'(\omega)<0$ for $k=1$ 
and $Q'(\omega)>0$ for $k\ge 3$.
Let us mention that
the sign of the instability criterion,
$Q'(\omega)>0$
differs from \cite{VaKo}
because of their writing the solitary waves
in the form
$\varphi(x) e^{+i\omega t}$.
\end{remark}

\begin{remark}
We expect that in the 1D case with $k=1$
the small amplitude
solitary waves are spectrally stable;
we will prove this elsewhere.

We also expect that in the 1D case with $k=2$
(``quintic nonlinearity'',
$f(\psi\sp\ast\beta\psi)\beta\psi=O(\abs{\psi}^5)$)
the small solitary wave solutions
of the nonlinear Dirac equation in 1D
are spectrally stable.
For the corresponding nonlinear Schr\"odinger
equation (quintic nonlinearity in 1D),
the charge is constant,
thus the zero eigenvalue of a linearized operator
is always of higher algebraic multiplicity.
For the Dirac equation, this
degeneracy is ``resolved'':
the charge is now a decaying function
for $\omega\lesssim m$
(with nonzero limit as $\omega\to m$),
suggesting that
there are two purely imaginary eigenvalues
$\pm\lambda\sb\omega$
in the spectrum of $A\sb\omega$,
with $\lambda\sb\omega=o(m-\omega)$,
but no eigenvalues with nonzero real part.
\end{remark}

\begin{remark}
The same approach
can be used
to show instability of small amplitude solitary wave
solutions to the nonlinear Dirac equation in
$n\ge 3$, with
$f(s)=as^k+O(s^{k+1})$,
$a>0$, with any $k\in\N$.
These results in 3D have been
independently obtained in \cite{guan-gustafson-2010}.
Let us notice that
in the 3D case
for the cubic nonlinearity $f(s)=s$
(this is the original Soler model
from \cite{PhysRevD.1.2766}),
based on the numerical evidence from
\cite{PhysRevD.1.2766,as1983},
one expects that
the charge
$Q(\omega)$
has a local minimum at $\omega=0.936 m$,
suggesting that
the solitary waves
with $0.936 m<\omega<m$ are linearly unstable,
but then 
at $\omega=0.936 m$ the real eigenvalues
collide at $\lambda=0$,
and there are no nonzero real eigenvalues
in the spectrum
for $\omega\lesssim 0.936 m$.
\end{remark}

\begin{remark}
We can not rule out the possibility
that the eigenvalues with nonzero real part
could bifurcate directly from the imaginary axis
into the complex plane. Such a mechanism is absent
for the nonlinear Schr\"odinger equation
linearized at a solitary wave,
for which the point eigenvalues always remain
on the real or imaginary axes.
At present, though, we do not have examples of such
bifurcations
in the context of nonlinear Dirac equation
linearized at a solitary wave.
\end{remark}

\begin{remark}
The existence of solitary waves
stated in the theorem
follows from \cite{MR847126}.
We will reproduce their argument
in order to have the asymptotics
of profiles of solitary waves
for $\omega$ near $m$.
\end{remark}

\section{Nonlinear Schr\"odinger equation}
\label{sect-nls}

We are going to use the fact that
the nonlinear Dirac equation
in the nonrelativistic limit
coincides with the nonlinear Schr\"odinger
equation,
\begin{equation}\label{nls}
i\p\sb t\psi=-\frac 1 2\p\sb x^2\psi+m\psi-f(\abs{\psi}^2)\psi,
\qquad
\psi(x,t)\in\C,
\quad
x\in\R.
\end{equation}
We will assume that
\[
f(s)=s^{k},
\quad
k\in\N.
\]

\subsection*{Solitary waves}

The solitary wave solutions
\[
\psi(x,t)=\phi\sb\omega(x)e^{-i\omega t},
\qquad
\phi\sb\omega\in H\sp 1(\R),
\]
exist
for $\omega\in(-\infty,m)$.
These solitary waves and their
asymptotics are well-known, to the
extent that we do not know which of the
multiple references
would be most appropriate.
Often we will not indicate explicitly the dependence
of the amplitude of the solitary wave on $\omega$,
writing $\phi$ instead of $\phi\sb\omega$.
This amplitude satisfies the equation
\[
\p\sb x^2\phi(x)=2(m-\omega)\phi-2\phi^{2k+1},
\qquad
x\in\R,
\]
which could be integrated to the relation
\[
\p\sb x\phi
=-\phi\sqrt{2(m-\omega)-\frac{2\phi^{2k}}{k+1}}.
\]
Introducing $\mathscr{X}(x,\omega)=\phi\sb\omega^2(x)$,
we get
\begin{equation}\label{x-prime}
\p\sb x\mathscr{X}
=-2\mathscr{X}\sqrt{2(m-\omega)-\frac{2\mathscr{X}^{k}}{k+1}}.
\end{equation}
We will perform the scaling
in terms of 
\begin{equation}\label{varepsilon-omega}
\varepsilon=\sqrt{2(m-\omega)}.
\end{equation}
Then, as can be seen from \eqref{x-prime},
\begin{equation}\label{def-xk}
\mathscr{X}(x,\omega)=\varepsilon^{2/k}
U(\varepsilon x),
\end{equation}
with
$U(y)$
positive spherically symmetric (even)
solution to the equation
\[
\p\sb y U(y)
=-2U\sqrt{1-\frac{2U^k(y)}{k+1}},
\qquad
\lim\sb{y\to\pm\infty} U(y)=0.
\]
Such a solution exists and is unique;
it is explicitly given by
\begin{equation}\label{def-vk}
U(y)
=\Big(\frac{k+1}{2\cosh^2 k y}\Big)^{1/k}.
\end{equation}

\subsection*{Linearization at a solitary wave}

To derive the linearization
of the nonlinear Schr\"odinger equation \eqref{nls}
at a solitary wave
$\psi(x,t)=\phi\sb\omega(x)e^{-i\omega t}$,
we use the Ansatz
\[
\psi(x,t)=(\phi\sb\omega(x)+\rho(x,t))e^{-i\omega t},
\qquad
\rho(x,t)\in\C,
\qquad
x\in\R,
\]
and arrive at the linearized equation
\begin{equation}\label{nls-lin}
\p\sb t
\bm{\uprho}
=
\eub{j}\bm{L}(\omega)
\bm{\uprho},
\qquad
\bm{\uprho}(x,t)=\begin{bmatrix}\Re\rho(x,t)\\\Im\rho(x,t)\end{bmatrix},
\end{equation}
with
\begin{equation}\label{def-jl}
\eub{j}=
\begin{bmatrix}0&1\\-1&0\end{bmatrix},
\qquad
\bm{L}=
\begin{bmatrix}L\sb{-}&0\\0&L\sb{+}\end{bmatrix},
\end{equation}
\begin{equation}
L\sb{-}(\omega)=-\frac{1}{2}\p\sb x^2+m-\omega-f(\phi\sb\omega^2),
\qquad
L\sb{+}(\omega)=L\sb{-}-2f'(\phi\sb\omega^2)\phi\sb\omega^2.
\end{equation}
In the case of the nonlinearity
$f(s)=s^k$,
taking into account
the explicit form of $\mathscr{X}(x)$
given by 
\eqref{def-xk},
one obtains
\begin{equation}\label{l0-l1}
L\sb{-}(\omega)=
-\frac 1 2\p\sb x^2
+m-\omega-\varepsilon^2\frac{k+1}{2\cosh^2(\varepsilon k x)},
\qquad
L\sb{+}(\omega)=
-\frac 1 2\p\sb x^2
+m-\omega-\varepsilon^2\frac{(2k+1)(k+1)}{2\cosh^2(\varepsilon k x)}.
\end{equation}

\begin{lemma}[Vakhitov-Kolokolov stability criterion]
\label{lemma-vk}
For the linearization
\eqref{nls-lin}
at a particular solitary wave
$\phi\sb\omega(x)e^{-i\omega t}$,
there are real nonzero eigenvalues
$\pm\lambda\in\sigma\sb{d}(\eub{j}\bm{L})$,
$\lambda>0$,
if and only if
$\frac{d}{d\omega}\norm{\phi\sb\omega}\sb{L\sp 2}^2>0$
at this value of $\omega$.
\end{lemma}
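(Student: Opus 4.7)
The plan is to follow the classical Vakhitov--Kolokolov argument. Writing an eigenfunction of $\eub{j}\bm{L}$ at eigenvalue $\lambda$ in the form $(a,b)\sp\top$, the spectral equation unfolds to $L\sb{+} b=\lambda a$ and $-L\sb{-} a=\lambda b$. For $\lambda\ne 0$ one eliminates $b=-L\sb{-} a/\lambda$ and arrives at $L\sb{+}L\sb{-} a=-\lambda\sp 2 a$, so a real nonzero eigenvalue of $\eub{j}\bm{L}$ exists precisely when the operator $L\sb{+}L\sb{-}$ admits a negative eigenvalue.

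Next I would record the standard spectral picture for $L\sb{\pm}$, which follows from the explicit P\"oschl--Teller form \eqref{l0-l1}: $L\sb{-}$ is nonnegative, with simple kernel $\R\phi\sb\omega$ (by the profile equation $L\sb{-}\phi\sb\omega=0$ together with ground-state positivity of $\phi\sb\omega$), while $L\sb{+}$ has exactly one simple negative eigenvalue and a simple zero eigenvalue spanned by $\phi\sb\omega\pr$ (translation invariance). Since $L\sb{-}$ is positive and invertible on $\{\phi\sb\omega\}\sp\perp$, the substitution $c=L\sb{-}\sp{1/2}a$ recasts the problem as $\mathcal{K} c=-\lambda\sp 2 c$, where $\mathcal{K}:=L\sb{-}\sp{1/2}L\sb{+}L\sb{-}\sp{1/2}$ is self-adjoint on $\{\phi\sb\omega\}\sp\perp$. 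By Sylvester's law of inertia, $\mathcal{K}$ and $L\sb{+}|\sb{\{\phi\sb\omega\}\sp\perp}$ carry the same number of negative eigenvalues.

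A standard index-counting lemma then gives that $L\sb{+}|\sb{\{\phi\sb\omega\}\sp\perp}$ has one negative eigenvalue when $\langle L\sb{+}\sp{-1}\phi\sb\omega,\phi\sb\omega\rangle>0$ and none when this quantity is negative. Finally, differentiating the profile equation $L\sb{-}(\omega)\phi\sb\omega=0$ in $\omega$ yields $L\sb{+}\p\sb\omega\phi\sb\omega=\phi\sb\omega$, whence
\[
\langle L\sb{+}\sp{-1}\phi\sb\omega,\phi\sb\omega\rangle
=\langle\p\sb\omega\phi\sb\omega,\phi\sb\omega\rangle
=\tfrac{1}{2}\tfrac{d}{d\omega}\norm{\phi\sb\omega}\sb{L\sp 2}\sp 2,
\]
which produces the claimed equivalence, together with the Hamiltonian symmetry $\lambda\mapsto-\lambda$ that supplies the paired eigenvalue.

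The hard part is verifying that the reduction from the non-self-adjoint $\eub{j}\bm{L}$ to the self-adjoint $\mathcal{K}$ preserves negative eigenvalues in both directions rather than giving only an implication. This requires tracking the precise embedding of $\ker L\sb{\pm}$ — in particular justifying that $a$ may be taken in $\{\phi\sb\omega\}\sp\perp$ for $\lambda\ne 0$, and confirming that the generalized null space of $\eub{j}\bm{L}$ at $\lambda=0$ (spanned by $(\phi\sb\omega,0)\sp\top$ with Jordan partner $(0,\p\sb\omega\phi\sb\omega)\sp\top$ from the $\mathbf{U}(1)$ symmetry, plus the translation pair $(\phi\sb\omega\pr,0)\sp\top$ and its partner) does not contaminate the Sylvester count on $\{\phi\sb\omega\}\sp\perp$.
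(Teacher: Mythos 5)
Your proof is correct and is essentially the argument the paper itself relies on: Lemma~\ref{lemma-vk} is stated in the text without proof, as the classical criterion of \cite{VaKo}, but the proof of Proposition~\ref{prop-a-hat} (via Lemma~\ref{lemma-a2}) runs exactly your steps for the rescaled operators --- constrained minimization of $\langle r,\hat{L}\sb{+}r\rangle$ over $\{r:\norm{r}=1,\ r\perp\upphi\}$, the conjugation $R=\hat{L}\sb{-}^{-1/2}r$ to pass to $\hat{L}\sb{-}^{1/2}\hat{L}\sb{+}\hat{L}\sb{-}^{1/2}$, and the sign of $\langle\upphi,\hat{L}\sb{+}^{-1}\upphi\rangle$, which the paper evaluates explicitly through the scaling generator $\uptheta$ rather than through $L\sb{+}\p\sb\omega\phi\sb\omega=\phi\sb\omega$ and $Q'(\omega)$ as you do.

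Two small clarifications on the point you flag as ``the hard part.'' First, for $\lambda\ne 0$ it is $b$, not $a$, that is automatically orthogonal to $\phi\sb\omega$ (since $\lambda\langle\phi\sb\omega,b\rangle=-\langle L\sb{-}\phi\sb\omega,a\rangle=0$); but this does not matter, because the vector you actually feed into the Sylvester count is $c=L\sb{-}^{1/2}a$, and $\langle\phi\sb\omega,L\sb{-}^{1/2}a\rangle=\langle L\sb{-}^{1/2}\phi\sb\omega,a\rangle=0$ holds by construction, so $\ker L\sb{-}$ and the generalized null space never contaminate the count once $\lambda\ne 0$. Second, for the ``only if'' direction as literally stated you must also dispose of the borderline case $\frac{d}{d\omega}\norm{\phi\sb\omega}\sb{L^2}^2=0$: there $\langle L\sb{+}^{-1}\phi\sb\omega,\phi\sb\omega\rangle=0$ still gives $L\sb{+}\vert\sb{\{\phi\sb\omega\}^\perp}\ge 0$ (with nontrivial kernel), hence $\mathcal{K}\ge 0$ and no real nonzero eigenvalue, consistent with the paper's remark that at $Q'(\omega)=0$ the algebraic multiplicity of $\lambda=0$ jumps instead.
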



Let us consider the spectrum of
the linearized equation \eqref{nls-lin}
for the nonlinearity $f(s)=s^k$.
Using \eqref{def-xk},
one derives 
for the corresponding charge:
\begin{equation}\label{qp}
Q(\omega)
=\int\sb{\R}\phi\sb\omega^2(x)\,dx
=\int\sb{\R}\mathscr{X}(x,\omega)\,dx
=\varepsilon^{\frac 2 k -1}C
=(2(m-\omega))^{\frac 1 k -\frac 1 2}C,
\qquad
\omega<m,
\end{equation}
where
$C=\int\sb{\R}U(y)\,dy>0$.
We see from \eqref{qp}
that
one has
$Q'(\omega)<0$ for $k=1$,
$Q'(\omega)\equiv 0$ for $k=2$,
and $Q'(\omega)>0$ for $k\ge 3$.

\begin{lemma}\label{lemma-nls-k}
Let
$f(s)=s^k
$,
$k\in\N$.
If
$k=1$ or $k=2$,
then
$\sigma\sb{p}(\eub{j}\bm{L})\subset i\R$.
If $k\ge 3$,
then
$\sigma\sb{p}(\eub{j}\bm{L})\ni\{\pm\varepsilon^2\Lambda\}$,
for some $\Lambda>0$
and
$\varepsilon=\sqrt{2(m-\omega)}$.
\end{lemma}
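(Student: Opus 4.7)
The plan is to combine the Vakhitov--Kolokolov criterion of Lemma~\ref{lemma-vk} with an explicit $\varepsilon$-rescaling of the operators in \eqref{l0-l1}. From the closed form \eqref{qp}, $Q(\omega)=(2(m-\omega))^{1/k-1/2}C$ with $C>0$, one reads off, for every $\omega<m$, the signs $Q'(\omega)<0$ for $k=1$, $Q'(\omega)\equiv 0$ for $k=2$, and $Q'(\omega)>0$ for $k\ge 3$.

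For $k\ge 3$, Lemma~\ref{lemma-vk} applied directly produces a pair of nonzero real eigenvalues $\pm\lambda$ of $\eub{j}\bm{L}(\omega)$. To extract the stated scaling $\lambda=\varepsilon^2\Lambda$, I would perform the change of variable $y=\varepsilon x$; using $m-\omega=\varepsilon^2/2$ and \eqref{l0-l1}, the operators factor as $L_\pm(\omega)=\varepsilon^2\widetilde{L}_\pm$, with
\[
\widetilde{L}_-=-\tfrac12\p_y^2+\tfrac12-\frac{k+1}{2\cosh^2(ky)},\qquad
\widetilde{L}_+=-\tfrac12\p_y^2+\tfrac12-\frac{(2k+1)(k+1)}{2\cosh^2(ky)},
\]
manifestly independent of $\omega$. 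Consequently, the point spectra are related by $\sigma_p(\eub{j}\bm{L}(\omega))=\varepsilon^2\,\sigma_p(\eub{j}\widetilde{\bm{L}})$, so $\Lambda:=\lambda/\varepsilon^2$ is a fixed positive eigenvalue of the $\omega$-independent operator $\eub{j}\widetilde{\bm{L}}$.

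For $k=1$ or $k=2$, Lemma~\ref{lemma-vk} excludes nonzero real eigenvalues, so the remaining task is to rule out eigenvalues in $\C\setminus(\R\cup i\R)$. Here I would invoke the classical factorization for NLS linearizations: because $L_-(\omega)\phi_\omega=0$ with $\phi_\omega>0$, the profile $\phi_\omega$ is the ground state of $L_-$, so $L_-\ge 0$ with $\ker L_-$ spanned by $\phi_\omega$. For an eigenpair $\eub{j}\bm{L}(u,v)^T=\lambda(u,v)^T$ with $\lambda\ne 0$, the relations $L_+v=\lambda u$ and $L_-u=-\lambda v$ give (by pairing the second with $\phi_\omega$) both $v\perp\phi_\omega$ and $L_-L_+v=-\lambda^2 v$. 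Setting $w=L_-^{-1/2}v$ on $(\ker L_-)^\perp$ converts this to the self-adjoint spectral problem $L_-^{1/2}L_+L_-^{1/2}w=-\lambda^2 w$; hence $\lambda^2\in\R$, and combined with the absence of real nonzero eigenvalues this gives $\sigma_p(\eub{j}\bm{L})\subset i\R$.

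The sign reading of $Q'$ and the $\varepsilon$-rescaling are mechanical. The only step that requires some care is the domain-level justification of the $L_-^{-1/2}$ reduction in the $k=1,2$ case; however, this is standard for the one-dimensional NLS ground state, where $L_-$ has a simple zero eigenvalue spanned by $\phi_\omega$ and strictly positive essential spectrum above it, so no new ideas are needed.
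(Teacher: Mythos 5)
Your proof is correct and follows essentially the same route as the paper: read off the sign of $Q'(\omega)$ from \eqref{qp}, apply the Vakhitov--Kolokolov criterion of Lemma~\ref{lemma-vk}, and use the rescaling $y=\varepsilon x$ to factor $L_\pm(\omega)=\varepsilon^2\hat L_\pm$ and obtain $\lambda_\omega=\varepsilon^2\Lambda$. The only difference is that for $k=1,2$ you additionally spell out, via the standard $L_-^{1/2}L_+L_-^{1/2}$ reduction, why no eigenvalues can sit off the axes --- a step the paper leaves implicit --- and that addition is sound.
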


\begin{proof}
In the case $k=1$,
since $Q'(\omega)<0$ by \eqref{qp},
Lemma~\ref{lemma-vk}
guarantees that there are no nonzero real eigenvalues:
$\sigma\sb{p}(\eub{j}\bm{L}(\omega))\cap(\R\backslash 0)=\emptyset$.
In the case $k=2$, $Q'(\omega)\equiv 0$,
and the eigenvalue $\lambda=0$
is always of increased algebraic multiplicity
$6$
(generically,
the algebraic multiplicity of $\lambda=0$
is $4$,
jumping to $6$ when $Q'(\omega)=0$).
In the case $k\ge 3$,
since $Q'(\omega)>0$ by \eqref{qp},
Lemma~\ref{lemma-vk} states that
there are two real eigenvalues
$\pm\lambda\sb\omega\in\sigma\sb{p}(\eub{j}\bm{L}(\omega))$,
with $\lambda\sb\omega>0$.
The rescaling $y=\varepsilon x$
shows that $\lambda\sb\omega=\varepsilon^2\Lambda$,
with some $\Lambda>0$.
\end{proof}



For $k\in\N$, consider
the Schr\"odinger operators
\begin{equation}\label{l-hat-1}
\hat{\bm{L}}
=\begin{bmatrix}\hat{L}\sb{-}&0\\0&\hat{L}\sb{+}\end{bmatrix},
\qquad
\hat{L}\sb{-}
=
-\frac 1 2 \p\sb y^2+\frac 1 2-\frac{k+1}{2\cosh^2 ky},
\qquad
\hat{L}\sb{+}
=
-\frac 1 2 \p\sb y^2
+
\frac 1 2-\frac{(2k+1)(k+1)}{2\cosh^2 ky}.
\end{equation}
Let $\upphi(y)=\frac{1}{\cosh^{1/k}ky}$; then
\begin{equation}\label{h0-varphi}
\hat{L}\sb{-} \upphi=0,
\qquad
\hat{L}\sb{+} \p\sb x\upphi=0.
\end{equation}

\begin{proposition}\label{prop-a-hat}
If $k\ge 3$,
the discrete spectrum of the operator
$\eub{j}\hat{\bm{L}}
=\begin{bmatrix}0&\hat{L}\sb{-}\\-\hat{L}\sb{+}&0\end{bmatrix}$
contains two real nonzero eigenvalues
$\pm\Lambda$,
$\Lambda>0$.
If $k\le 2$,
$\sigma\sb d(\eub{j}\hat{\bm{L}})\subset i\R$.
\end{proposition}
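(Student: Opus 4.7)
The plan is to reduce Proposition~\ref{prop-a-hat} to Lemma~\ref{lemma-nls-k} via the same scaling $y=\varepsilon x$, $\varepsilon=\sqrt{2(m-\omega)}$, that was used to normalize the NLS soliton in \eqref{def-xk}. Using the relation $m-\omega=\varepsilon^2/2$, the explicit expressions in \eqref{l0-l1} for $L\sb{\pm}(\omega)$ reproduce the expressions in \eqref{l-hat-1} for $\hat{L}\sb{\pm}$, multiplied by $\varepsilon^2$: concretely, if $U\sb\varepsilon:L\sp 2(\R)\to L\sp 2(\R)$ denotes the unitary dilation $(U\sb\varepsilon\psi)(x)=\varepsilon\sp{1/2}\psi(\varepsilon x)$, a direct computation gives
\[
U\sb\varepsilon\sp{-1} L\sb{\pm}(\omega) U\sb\varepsilon = \varepsilon^2 \hat{L}\sb{\pm}.
\]
Since $\eub{j}$ acts componentwise and commutes with multiplication by a scalar, the diagonal conjugation by $U\sb\varepsilon\oplus U\sb\varepsilon$ sends $\eub{j}\bm{L}(\omega)$ to $\varepsilon^2\,\eub{j}\hat{\bm{L}}$, establishing a unitary equivalence between the two operators up to the factor $\varepsilon^2$.

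With this equivalence in hand, I would read the conclusion directly off Lemma~\ref{lemma-nls-k}. For $k\ge 3$ the lemma supplies eigenvalues $\pm\varepsilon^2\Lambda\in\sigma\sb{p}(\eub{j}\bm{L}(\omega))$ with some $\Lambda>0$; dividing out the common factor $\varepsilon^2$ and noting that $\pm\Lambda$ lie outside the essential spectrum of $\eub{j}\hat{\bm{L}}$ (which is purely imaginary), one obtains $\pm\Lambda\in\sigma\sb{d}(\eub{j}\hat{\bm{L}})$, as required. For $k\in\{1,2\}$ the same lemma asserts $\sigma\sb{p}(\eub{j}\bm{L}(\omega))\subset i\R$, which under the rescaling yields $\sigma\sb{d}(\eub{j}\hat{\bm{L}})\subset i\R$.

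Because the argument is just a change of variable layered on top of an already-proved result, no serious obstacle arises; the only point worth emphasizing is that the constant $\Lambda$ of Lemma~\ref{lemma-nls-k} is genuinely $\omega$-independent, precisely because the rescaled eigenvalue problem $\eub{j}\hat{\bm{L}}\bm{\Psi}=\mu\bm{\Psi}$ (with $\mu=\lambda/\varepsilon^2$) carries no residual $\omega$-dependence. Proposition~\ref{prop-a-hat} can thus be viewed as the intrinsic, $\omega$-free spectral statement underlying the whole family $\{\eub{j}\bm{L}(\omega)\}\sb{\omega<m}$ in the nonrelativistic limit, and is the form of Lemma~\ref{lemma-nls-k} most convenient for feeding into the Rayleigh--Schr\"odinger perturbation argument of Section~\ref{sect-linearization}.
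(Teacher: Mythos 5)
Your rescaling identity is correct---the relation $U_\varepsilon^{-1}L_\pm(\omega)U_\varepsilon=\varepsilon^2\hat{L}_\pm$ is exactly the content of the remark the paper places immediately after this proposition---but as a proof of Proposition~\ref{prop-a-hat} the argument is circular within the paper's own logical structure. The only source of the eigenvalues $\pm\varepsilon^2\Lambda$ in Lemma~\ref{lemma-nls-k} is Lemma~\ref{lemma-vk}, the Vakhitov--Kolokolov criterion, which is stated in the paper \emph{without proof}; and the standard proof of that criterion (the argument of \cite{VaKo}) is precisely what the paper carries out in order to prove Proposition~\ref{prop-a-hat}. Moreover, the step of Lemma~\ref{lemma-nls-k} that you rely on---``the rescaling $y=\varepsilon x$ shows that $\lambda_\omega=\varepsilon^2\Lambda$ with $\Lambda>0$ independent of $\omega$''---is itself nothing other than the assertion $\pm\Lambda\in\sigma_d(\eub{j}\hat{\bm{L}})$, i.e.\ the conclusion you are trying to establish. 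The reduction therefore relabels the statement rather than proving it; the proposition's role in the paper (it is what Lemma~\ref{lemma-ker-a}, and hence the main theorem, is ultimately grounded on) is exactly to supply the proof that Lemma~\ref{lemma-vk} omits.

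The substantive content that is missing from your argument is the Vakhitov--Kolokolov mechanism itself. One shows that the minimum of $\langle r,\hat{L}_+r\rangle$ over $\norm{r}=1$, $\langle\upphi,r\rangle=0$ is negative by studying $f(z)=\langle\upphi,(\hat{L}_+-z)^{-1}\upphi\rangle$ and computing, via the identity $\hat{L}_+(-\uptheta-\frac{1}{k}\upphi)=\upphi$, that $f(0)=\langle\upphi,\hat{L}_+^{-1}\upphi\rangle=\big(\frac12-\frac1k\big)\int_{\R}\cosh^{-2/k}z\,dz$. The sign of this quantity---positive for $k\ge3$, zero for $k=2$, negative for $k=1$---is where the dichotomy in the proposition actually comes from; it produces a negative direction for $\hat{L}_-^{1/2}\hat{L}_+\hat{L}_-^{1/2}$, hence $-\Lambda^2\in\sigma_d(\hat{L}_-\hat{L}_+)$, hence $\pm\Lambda\in\sigma_d(\eub{j}\hat{\bm{L}})$ by the block identity. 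None of this is recoverable from the scaling argument alone. (A secondary gap: even granting Lemma~\ref{lemma-vk}, it concerns only real eigenvalues, so for $k\le2$ the inclusion $\sigma_d(\eub{j}\hat{\bm{L}})\subset i\R$---which also requires excluding genuinely complex eigenvalues---does not follow from it as stated.)
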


\begin{proof}
We start with constructing several important
relations.
By \eqref{h0-varphi},
\begin{equation}\label{l1-varphi}
\hat{L}\sb{+} \upphi=
\hat{L}\sb{-} \upphi+(\hat{L}\sb{+}-\hat{L}\sb{-})\upphi
=-\frac{k(k+1)}{\cosh^2 ky}\upphi.
\end{equation}
For $\mu>0$,
let
$\upphi\sb\mu(y)=\upphi(\mu y)$
and
$
\hat{L}\sb{\mu}
=
-\frac 1 2 \p\sb y^2+\mu^2
\left(\frac 1 2-\frac{k+1}{2\cosh^2 \mu ky}\right);
$
then
$\hat{L}\sb{\mu}\upphi\sb\mu=0$.
Taking the derivative
of this relation
with respect to $\mu$
and evaluating it at $\mu=1$, we get:
\begin{equation}\label{l1-theta}
\hat{L}\sb{+}\uptheta
=
\Big(
-1+\frac{k+1}{\cosh^2 ky}\Big)\upphi,
\qquad
\mbox{where}
\quad
\uptheta:=\frac{\p}{\p\mu}\At{\mu=1}
\frac{1}{\cosh^{\frac{1}{k}}\mu ky}
=-\frac{y\sinh ky}{\cosh^{1+\frac{1}{k}}ky}.
\end{equation}
By \eqref{l1-varphi} and \eqref{l1-theta},
\begin{equation}\label{l1-is-varphi}
\hat{L}\sb{+}(-\uptheta-\frac{1}{k}\upphi)=\upphi.
\end{equation}

Now we follow the argument from \cite{VaKo}.

\begin{lemma}\label{lemma-a2}
Let $k\ge 3$.
The minimum of
$\langle r,\hat{L}\sb{+} r\rangle$
under the constraints
$\langle r,r\rangle=1$ and $\langle\upphi,r\rangle=0$
is negative.
\end{lemma}

\begin{proof}
The vector $r$
corresponding to the minimum of $\langle r,\hat{L}\sb{+} r\rangle$
under the constraints
$\langle r,r\rangle=1$ and $\langle\upphi,r\rangle=0$
satisfies the equation
$
\hat{L}\sb{+} r=\alpha r+\beta\upphi,
$
where $\alpha,\,\beta\in\R$ are Lagrange multipliers.
Since
$\langle r,\hat{L}\sb{+} r\rangle=\langle r,\alpha r+\beta\upphi\rangle=\alpha$,
we need to know the sign of $\alpha$.
Denote
\[
f(z)=\langle\upphi,(\hat{L}\sb{+}-z)^{-1}\upphi\rangle,
\qquad
z\in\rho(\hat{L}\sb{+}).
\]
We note that
$f(z)$ has a removable singularity at $z=0$
since
$\upphi\in L\sp 2\sb r(\R)$,
while the restruction of $\hat{L}\sb{+}$
onto the space of spherically symmetric (that is, even) functions,
$\hat{L}\sb{+}:L\sp 2\sb r(\R)\to L\sp 2\sb r(\R)$,
has a bounded inverse.
 (being even)
Using \eqref{l1-is-varphi},
we have:
\[
f(0)
=\langle\upphi,\hat{L}\sb{+}^{-1}\upphi\rangle
=\langle\upphi,(-\uptheta-\frac{1}{k}\upphi)\rangle
=
\int\sb{\R}
\frac{1}{\cosh^{\frac{1}{k}}ky}
\left(
\frac{y\sinh ky}{\cosh^{1+\frac{1}{k}}ky}
-\frac{1}{k\cosh^{\frac{1}{k}}ky}
\right)\,dy
\]
\[
=
\int\sb{\R}
\Big(
\frac{z\sinh z}{\cosh^{1+\frac{2}{k}}z}
-\frac{dz}{\cosh^{\frac{2}{k}}z}
\Big)
\frac{dz}{k}
=\int\sb{\R}
\Big(
-z\,d\Big(\frac{1}{2\cosh^{\frac{2}{k}}z}\Big)
-\frac{dz}{k\cosh^{\frac{2}{k}}z}
\Big)
=
\big(\frac{1}{2}-\frac{1}{k}\big)\int\sb{\R}
\frac{dz}{\cosh^{\frac{2}{k}}z}>0,
\]
where we took into account that $k\ge 3$.
Since $f(0)>0$
and $\lim\limits\sb{z\to\lambda\sb 0+}f(z)=-\infty$
(it is $-\infty$ since $f'(z)>0$),
where $\lambda\sb 0<0$ is the smallest eigenvalue of
$\hat{L}\sb{+}$,
there is $\mu\in(\lambda\sb 0,0)$ such that $f(\mu)=0$.
\end{proof}

Since
$\hat{L}\sb{-}$ is positive-definite
and $r$ in Lemma~\ref{lemma-a2}
is orthogonal to $\ker \hat{L}\sb{-}$
(which is spanned by $\upphi$),
we may define $R=\hat{L}\sb{-}^{-1/2}r$;
then
\[
\langle R,\hat{L}\sb{-}^{1/2}\hat{L}\sb{+} \hat{L}\sb{-}^{1/2}R\rangle
=\langle r,\hat{L}\sb{+} r\rangle<0.
\]
Therefore,
$\sigma\sb d(\hat{L}\sb{-} \hat{L}\sb{+})
=\sigma\sb d(\hat{L}\sb{-}^{1/2}\hat{L}\sb{+} \hat{L}\sb{-}^{1/2})$
contains a negative eigenvalue $-\Lambda^2$,
where $\Lambda>0$.
Let $\xi$ be a corresponding eigenvector,
so that
$\hat{L}\sb{-} \hat{L}\sb{+}\xi=-\Lambda^2\xi$.
Then
\[
\begin{bmatrix}0&\hat{L}\sb{-}\\-\hat{L}\sb{+}&0\end{bmatrix}
\begin{bmatrix}\Lambda\xi\\\mp \hat{L}\sb{+}\xi\end{bmatrix}
=
\pm\Lambda
\begin{bmatrix}\Lambda\xi\\\mp \hat{L}\sb{+}\xi\end{bmatrix},
\qquad
\mbox{hence}
\quad
\pm\Lambda\in\sigma\sb{d}(\eub{j}\hat{\bm{L}}).
\]
\end{proof}

\begin{remark}
Since the spectrum of the linearization
at zero solitary wave is purely imaginary,
one has $\lim\limits\sb{\omega\to m-}\lambda\sb\omega=0$.
\end{remark}

\begin{remark}
Comparing $\bm{L}$
(see \eqref{def-jl}, \eqref{l0-l1})
to the operator $\hat{\bm{L}}$
introduced in \eqref{l-hat-1},
\begin{equation}\label{l-hat-2}
\hat{\bm{L}}
=\begin{bmatrix}\hat{L}\sb{-}&0\\0&\hat{L}\sb{+}\end{bmatrix},
\qquad
\hat{L}\sb{-}
=
-\frac 1 2 \p\sb y^2
+\frac 1 2-\frac{k+1}{2\cosh^2 k y},
\qquad
\hat{L}\sb{+}
=
-\frac 1 2 \p\sb y^2
+\frac 1 2-\frac{(2k+1)(k+1)}{2\cosh^2 k y},
\end{equation}
one concludes that
\[
\sigma(L\sb{-})=\varepsilon^2\sigma(\hat{L}\sb{-}),
\qquad
\sigma(L\sb{+})=\varepsilon^2\sigma(\hat{L}\sb{+}),
\qquad
\sigma(\eub{j}\bm{L})=\varepsilon^2\sigma(\eub{j}\hat{\bm{L}}).
\]
Therefore,
$\lambda\sb\omega=\varepsilon^2\Lambda$,
where $\Lambda$ is the positive eigenvalue of
$\eub{j}\hat{\bm{L}}$.
\end{remark}


\section{Nonlinear Dirac equation}
\label{sect-linearization}

In this section, we will use the notation
\[
g(s)=m s-f(s).
\]
In terms of
the components,
$\psi=\begin{bmatrix}\psi\sb 1\\\psi\sb 2\end{bmatrix}$,
$\psi\sb 1$, $\psi\sb 2\in\C$,
we write the nonlinear Dirac equation \eqref{nld-1d} as a system
\begin{equation}\label{nld-2c}
\left\{
\begin{array}{ll}
i\p\sb t \psi\sb 1=\p\sb x \psi\sb 2
+g(\abs{\psi\sb 1}^2-\abs{\psi\sb 2}^2)\psi\sb 1,
\\
i\p\sb t \psi\sb 2=-\p\sb x \psi\sb 1
-g(\abs{\psi\sb 1}^2-\abs{\psi\sb 2}^2)\psi\sb 2.
\end{array}
\right.
\end{equation}

\subsection*{Solitary waves}
\label{sect-solitary-waves-1d}

\begin{definition}
The solitary waves are solutions
to \eqref{nld-1d}
of the form
\[
\mathbf{S}=\{
\psi(x,t)=\phi\sb\omega(x)e^{-i\omega t}\sothat
\phi\sb\omega\in H\sp 1(\R,\C^2),\ \omega\in\R\}.
\]
\end{definition}

We start by demonstrating the existence of solitary wave solutions and
exploring their properties.
The following result follows from \cite{MR847126};
we follow our article \cite{dirac-1d-arxiv}.

\begin{lemma}\label{lemma-existence-nld-1d}
Let $G$ be the antiderivative of $g$ such that $G(0)=0$.
Assume that
there is $\omega\sb 0<m$
such that
for given $\omega\in(\omega\sb 0,m)$
there exists
$\varGamma\sb\omega>0$ such that
\begin{equation}\label{def-Xi}
\omega\varGamma\sb\omega=G(\varGamma\sb\omega), \qquad
\omega\ne g(\varGamma\sb\omega),
\qquad\mbox{and}\qquad
\omega s<G(s)\quad{\rm for}
\ s\in(0,\varGamma\sb\omega).
\end{equation}
Then there is a solitary wave solution
$\psi(x,t)=\phi\sb\omega(x)e^{-i\omega t}$
to \eqref{nld-1d},
where
\begin{equation}\label{psi-v-u}
\phi\sb\omega(x)
=\begin{bmatrix}v(x,\omega)\\u(x,\omega)\end{bmatrix},
\end{equation}
with both $v$ and $u$ real-valued,
belonging to $H\sp 1(\R)$ as functions of $x$,
$v$ being even and $u$ odd.

More precisely,
for $x\in\R$
and $\omega\in(\omega\sb 0,m)$,
let us define $\mathscr{X}(x,\omega)$
and $\mathscr{Y}(x,\omega)$ by
\begin{equation}\label{def-xi-eta}
\mathscr{X}=v^2-u^2,\qquad \mathscr{Y}=vu.
\end{equation}
Then $\mathscr{X}(x,\omega)$ is a unique positive
symmetric solution to
\begin{equation}
\p\sb x^2\mathscr{X}
=-\p\sb\mathscr{X}(-2G(\mathscr{X})^2+2\omega^2\mathscr{X}^2),
\qquad
\lim\sb{x\to\pm\infty}\mathscr{X}(x,\omega)=0,
\end{equation}
and $\mathscr{Y}(x,\omega)=-\frac{1}{4\omega}\p\sb x\mathscr{X}(x,\omega)$.
This solution satisfies
$\mathscr{X}(0,\omega)=\varGamma\sb\omega$.
\end{lemma}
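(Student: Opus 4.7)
The plan is to reduce the existence question to a phase-plane analysis of an autonomous ODE for $\mathscr{X}=v^2-u^2$, derived from the stationary Dirac system by eliminating the components $v$ and $u$. Substituting $\phi_\omega=[v,u]^\top$ (both real-valued) into \eqref{nld-2c} yields the first-order system
\[
\partial_x u = (\omega - m + f(\mathscr{X}))\, v,
\qquad
\partial_x v = -(\omega + m - f(\mathscr{X}))\, u.
\]
From this system I would derive two algebraic identities by differentiating:
$\partial_x(v^2 - u^2) = -4\omega\, vu$ and $\partial_x(v^2+u^2) = -4(m-f(\mathscr{X}))\, vu$. The first identity immediately gives $\mathscr{Y} = -\partial_x\mathscr{X}/(4\omega)$. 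Combining the two, $\omega\,\partial_x(v^2+u^2) = (m-f(\mathscr{X}))\,\partial_x\mathscr{X}$, and integrating with the decay condition at infinity produces the first integral $\omega(v^2+u^2) = G(\mathscr{X})$. Using the algebraic identity $(v^2+u^2)^2 = \mathscr{X}^2 + 4\mathscr{Y}^2$ to eliminate $v$ and $u$ entirely then yields the closed first-order relation $(\partial_x\mathscr{X})^2 = 4G(\mathscr{X})^2 - 4\omega^2\mathscr{X}^2$, which is equivalent to the Newton-type equation $\partial_x^2\mathscr{X} = -\partial_\mathscr{X}(2\omega^2\mathscr{X}^2 - 2G(\mathscr{X})^2)$ stated in the lemma.

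The next step is a phase-plane analysis of this Newton equation with effective potential $V(\mathscr{X}) = 2\omega^2\mathscr{X}^2 - 2G(\mathscr{X})^2$. Since $G(s)=ms+O(s^2)$ near $0$, one has $V''(0) = 4(\omega^2 - m^2)<0$ for $|\omega|<m$, so the origin is a hyperbolic saddle of the planar flow. The hypotheses in \eqref{def-Xi} then encode exactly what is needed for a symmetric homoclinic loop: $\omega s < G(s)$ on $(0,\varGamma_\omega)$ forces $V<0$ on that interval, so real orbits exist; the equality $\omega\varGamma_\omega=G(\varGamma_\omega)$ makes $\varGamma_\omega$ a zero of $V$; and $\omega\ne g(\varGamma_\omega)$ makes this a \emph{simple} zero, so the orbit reaches $\varGamma_\omega$ in finite time and bounces back. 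Translation invariance allows me to place the turning point at $x=0$, while the saddle structure at the origin yields the exponential decay $\mathscr{X}(x)\lesssim e^{-2\sqrt{m^2-\omega^2}\,|x|}$, and reflection symmetry of the orbit makes $\mathscr{X}$ even (hence $\mathscr{Y}=-\partial_x\mathscr{X}/(4\omega)$ odd).

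Finally, I would reconstruct $v$ and $u$ from
\[
v^2 = \tfrac{1}{2}\bigl(G(\mathscr{X})/\omega + \mathscr{X}\bigr),
\qquad
u^2 = \tfrac{1}{2}\bigl(G(\mathscr{X})/\omega - \mathscr{X}\bigr),
\]
taking $v>0$ (hence even) and choosing the sign of $u$ so that $u(0)=0$ with $u$ changing sign at the origin (hence odd). Nonnegativity of $v^2$ and $u^2$ is precisely the inequality $G(\mathscr{X})\ge\omega|\mathscr{X}|$ along the orbit, which is guaranteed by the assumptions in \eqref{def-Xi}. The step that I expect to be most delicate is verifying the smoothness (and hence $H^1$ regularity) of $u$ at $x=0$: one must show that $u^2(x) = (G(\mathscr{X}(x))/\omega - \mathscr{X}(x))/2$ vanishes quadratically there, so that the square root extends to a smooth function. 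This is exactly where the nondegeneracy condition $\omega\ne g(\varGamma_\omega)$ is crucial — it forces $G(\mathscr{X})/\omega - \mathscr{X}$ to vanish linearly in $\mathscr{X}-\varGamma_\omega$, while the first-integral relation together with $V'(\varGamma_\omega)\ne 0$ forces $\mathscr{X}(x)-\varGamma_\omega$ to vanish quadratically in $x$, yielding the required quadratic vanishing of $u^2$ and the desired $H^1$ regularity. Exponential decay at infinity, combined with this local regularity at the turning point, then completes the verification that $v,u\in H^1(\R)$.
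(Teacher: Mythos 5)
Your proposal is correct and follows essentially the same route as the paper: derive the first integral $\omega(v^2+u^2)=G(v^2-u^2)$ (the paper gets it from the vanishing of the Hamiltonian $h(v,u)=\frac{\omega}{2}(v^2+u^2)-\frac12 G(v^2-u^2)$ on the homoclinic trajectory, you get it by integrating the combination of the two derivative identities — an equivalent computation), reduce to the Newton equation for $\mathscr{X}$ in the potential $-2G(s)^2+2\omega^2 s^2$, and read off from \eqref{def-Xi} the existence of a nondegenerate turning point at $\varGamma\sb\omega$ for the zero-energy orbit. Your additional discussion of reconstructing $v,u$ from $\mathscr{X}$ and of the quadratic vanishing of $u^2$ at the turning point (via $\omega\ne g(\varGamma\sb\omega)$) correctly fills in a step the paper leaves implicit.
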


\begin{proof}
From (\ref{nld-2c}), we obtain:
\begin{equation}\label{omega-v-u}
\left\{
\begin{array}{ll}
\omega v=\p\sb x u+g(\abs{v}^2-\abs{u}^2)v,
\\
\omega u=-\p\sb x v-g(\abs{v}^2-\abs{u}^2)u.
\end{array}
\right.
\end{equation}
Assuming that both $v$ and $u$ are real-valued
(this will be justified once we found
real-valued $v$ and $u$),
we can rewrite (\ref{omega-v-u}) as the following
Hamiltonian system:
\begin{equation}\label{stat-eqn}
\left\{
\begin{array}{ll}
\p\sb x u=\omega v-g(v^2-u^2)v=\p\sb{v} h(v,u),
\\
-\p\sb x v=\omega u+g(v^2-u^2)u=\p\sb{u} h(v,u),
\end{array}
\right.
\end{equation}
where the Hamiltonian $h(v,u)$ is given by
\begin{equation}\label{def-h}
h(v,u)=\frac{\omega}{2}(v^2+u^2)-\frac{1}{2}G(v^2-u^2).
\end{equation}
The solitary wave
with a particular $\omega\in(\omega\sb 0,m)$
corresponds to a trajectory of this Hamiltonian
system such that
\[
\lim\sb{x\to\pm\infty}v(x,\omega)
=\lim\sb{x\to\pm\infty}u(x,\omega)=0,
\]
hence
$\lim\sb{x\to\pm\infty}\mathscr{X}=0$.
Since $G(s)$ satisfies
$G(0)=0$,
we conclude that
$
h(v(x),u(x))\equiv 0,
$
which leads to
\begin{equation}\label{phi-phi-g}
\omega(v^2+u^2)=G(v^2-u^2).
\end{equation}
We conclude from
\eqref{phi-phi-g}
that
solitary waves may only correspond to $\abs{\omega}<m$,
$\omega\ne 0$.


The functions $\mathscr{X}(x,\omega)$ and $\mathscr{Y}(x,\omega)$
introduced in
\eqref{def-xi-eta} are to solve
\begin{equation}\label{xi-eta-system}
\left\{
\begin{array}{ll}
\p\sb x\mathscr{X}=-4\omega\mathscr{Y},
\\
\p\sb x\mathscr{Y}=-(v^2+u^2)g(\mathscr{X})+\omega\mathscr{X}
=-\frac{1}{\omega}G(\mathscr{X})g(\mathscr{X})+\omega\mathscr{X},
\end{array}
\right.
\end{equation}
and to have the asymptotic behavior
$\lim\sb{\abs{x}\to\infty}\mathscr{X}(x)=0$,
$\lim\sb{\abs{x}\to\infty}\mathscr{Y}(x)=0$.
In the second equation in
(\ref{xi-eta-system}), we used the relation (\ref{phi-phi-g}).
The system (\ref{xi-eta-system}) can be
written as the following equation on $\mathscr{X}$:
\begin{equation}\label{xi-p-p}
\p\sb x^2\mathscr{X}
=-\p\sb\mathscr{X}(-2 G(\mathscr{X})^2+2\omega^2\mathscr{X}^2)
=4
\big(G(\mathscr{X})g(\mathscr{X})-\omega^2\mathscr{X}\big).
\end{equation}
This equation describes a particle in the potential
$-2 G(s)^2+2\omega^2 s^2$.
The condition \eqref{def-Xi}
is needed for the existence
of the turning point
of the zero energy trajectory
in this potential,
at $s=\varGamma\sb\omega$.
\end{proof}

\subsection*{Solitary waves in the nonrelativistic limit}

From now on,
without loss of generality,
we will assume that
the nonlinearity
$f\in C\sp\infty(\R)$
is such that
\[
f(s)=s^k+o(s^{k+1}),
\qquad
k\in\N,
\]
and consider $k$ fixed throughout
the rest of the article
(this is to avoid writing subscripts).
By Lemma~\ref{lemma-existence-nld-1d},
there is $\omega\sb 0<m$
such that
there are solitary wave solutions
for $\omega\in(\omega\sb 0,m)$.

We are going to determine
the asymptotics of solitary waves
in the nonrelativistic limit $\omega\to m$.

\bigskip

We consider the function $g(s):=m-f(s)$ and its antiderivative,
\[
g(s)=m-s^k+O(s^{k+1}),
\qquad
G(s)=ms-\frac{s^{k+1}}{k+1}+O(s^{k+2}).
\]
Then
the expression in right-hand side of \eqref{xi-p-p}
takes the following form:
\[
G(s)
g(s)
-\omega^2 s
=
\big(ms-\frac{s^{k+1}}{k+1}+O(s^{k+2})\big)
\big(m-s^k+O(s^{k+2})\big)
-\omega^2s
=(m^2-\omega^2)s
-\frac{k+2}{k+1}s^{k+1}
+ O(s^{k+2}).
\]
Let $\omega\in(\omega\sb 0,m)$,
so that there is a solitary wave
$\phi\sb\omega(x)e^{-i\omega t}$
with this particular value of $\omega$.
Denote
\begin{equation}\label{epsilon-omega}
\epsilon=\sqrt{m^2-\omega^2}.
\end{equation}
Let
$\mathscr{X}(x,\omega)$
be a positive symmetric solution to
\eqref{xi-p-p}
from Lemma~\ref{lemma-existence-nld-1d};
we rewrite \eqref{xi-p-p} as
\begin{equation}\label{xpp}
-\frac{1}{4}
\p\sb x^2\mathscr{X}
-\frac{k+2}{k+1}\mathscr{X}^{k+1}
+\epsilon^2\mathscr{X}
=O(\mathscr{X}^{k+2}).
\end{equation}

Let $U(y)$ be a positive symmetric solution to
\begin{equation}\label{fpp}
-\frac{1}{4}\p\sb y^2 U-\frac{k+2}{k+1}U^{k+1}+U=0,
\qquad
\lim\sb{y\to\infty}U(y)=0.
\end{equation}
Such a solution $U(y)$ exists and is unique,
and is explicitly given by \eqref{def-vk}.

\begin{lemma}\label{lemma-xx-ff}
For $\omega\in(\omega\sb 0,m)$,
there are the relations
\begin{equation}\label{xx-ff}
\mathscr{X}(x,\omega)=
\epsilon^{\frac 2 k}U(\epsilon x)+O(\epsilon^{\frac 4 k}),
\end{equation}
\begin{equation}\label{uv-ff}
v(x,\omega)=\epsilon^{\frac{1}{k}}
(U(\epsilon x))^{\frac 1 2}
+O(\epsilon^{\frac{3}{k}}),
\qquad
u(x,\omega)=O(\epsilon^{1+\frac{1}{k}}).
\end{equation}
\end{lemma}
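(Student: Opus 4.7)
The plan is to perform the nonrelativistic rescaling suggested by \eqref{xpp} and \eqref{fpp} and then promote the formal $\epsilon\to 0$ convergence into the quantitative bound \eqref{xx-ff} by an implicit function / contraction argument. Setting $y=\epsilon x$ and $\mathscr{X}(x,\omega)=\epsilon^{2/k}W(\epsilon x,\epsilon)$, the exponents are chosen so that the three principal terms in \eqref{xpp}, namely $\tfrac14\p_x^2\mathscr{X}$, $\tfrac{k+2}{k+1}\mathscr{X}^{k+1}$, and $\epsilon^2\mathscr{X}$, all balance at order $\epsilon^{2+2/k}$, whereas the error $O(\mathscr{X}^{k+2})$ becomes $O(\epsilon^{2+4/k})$. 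Dividing \eqref{xpp} by $\epsilon^{2+2/k}$ then gives an equation for $W$ that is an $O(\epsilon^{2/k})$ perturbation of \eqref{fpp}; the scaling is consistent with the turning-point condition \eqref{def-Xi}, which forces $\mathscr{X}(0,\omega)=\varGamma\sb\omega=O(\epsilon^{2/k})$.

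Next I write $W=U+R$. Since $U$ solves \eqref{fpp} exactly, the remainder $R$ obeys
\[
\mathcal{L} R = O(\epsilon^{2/k}) + O(R^2), \qquad \mathcal{L} := -\tfrac14\p_y^2 - (k+2)U^k + 1,
\]
where $\mathcal{L}$ is the linearization of the limit equation at $U$. On $L^2(\R)$, $\mathcal{L}$ has a simple negative ground-state eigenvalue (with even eigenvector), a simple zero eigenvalue with odd eigenvector $\p_y U$ (from translation invariance of \eqref{fpp}), and essential spectrum $[1,\infty)$, so its restriction to the even subspace $L^2\sb{\mathrm{even}}(\R)$ has bounded inverse. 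Since $\mathscr{X}$ and $U$ are both even by Lemma~\ref{lemma-existence-nld-1d}, so is $R$, and a standard contraction argument in $H^2\sb{\mathrm{even}}(\R)$ on a ball of radius $O(\epsilon^{2/k})$ produces a unique such $R$ with $\norm{R}\sb{H^2}=O(\epsilon^{2/k})$. Unscaling yields \eqref{xx-ff}.

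To extract the components, I use the relation $\mathscr{Y}=-\tfrac{1}{4\omega}\p_x\mathscr{X}$ from Lemma~\ref{lemma-existence-nld-1d}, which together with \eqref{xx-ff} gives $\mathscr{Y}=O(\epsilon^{1+2/k})$. Inverting the algebraic system $v^2-u^2=\mathscr{X}$, $vu=\mathscr{Y}$ via $v^2=\tfrac12\bigl(\mathscr{X}+\sqrt{\mathscr{X}^2+4\mathscr{Y}^2}\bigr)$ and using $\mathscr{Y}^2/\mathscr{X}^2=O(\epsilon^2)$ produces $v^2=\mathscr{X}+O(\epsilon^{2+2/k})$; taking the square root gives the expansion of $v$ in \eqref{uv-ff}, after which $u=\mathscr{Y}/v$ immediately yields the bound on $u$.

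The delicate step is the perturbation argument, where the main point is the $\epsilon$-independent invertibility of $\mathcal{L}$ on $L^2\sb{\mathrm{even}}(\R)$; this rests on the simplicity of the zero eigenvalue of $\mathcal{L}$ on $L^2(\R)$, which in turn follows from uniqueness (up to translation) of the positive $H^1$ solution to \eqref{fpp}. One must also control the tail contribution $O(\mathscr{X}^{k+2})$ in a norm reflecting the exponential decay of $U$, so that the contraction takes a small ball into itself on all of $\R$ rather than merely on compact sets. A more elementary alternative that bypasses spectral theory is to exploit the first integral $(\p_x\mathscr{X})^2=4\bigl(G(\mathscr{X})^2-\omega^2\mathscr{X}^2\bigr)$, separate variables, and expand the resulting implicit representation of $\mathscr{X}$ in powers of $\epsilon^{2/k}$ directly.
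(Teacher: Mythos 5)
Your proposal is correct and follows essentially the same route as the paper: the rescaling $\mathscr{X}=\epsilon^{2/k}X(\epsilon x,\epsilon)$, the decomposition $X=U+Z$, the $\epsilon$-independent invertibility of the linearized operator $-\tfrac14\p\sb y^2+1-(k+2)U^k$ on the even subspace (its kernel being spanned by the odd function $\p\sb y U$), and a contraction mapping argument, followed by recovering $v$ and $u$ from $\mathscr{X}$ and $\mathscr{Y}=-\tfrac{1}{4\omega}\p\sb x\mathscr{X}$. The only cosmetic difference is that you contract on a ball of radius $O(\epsilon^{2/k})$ directly, whereas the paper contracts on a ball of radius $\epsilon^{1/k}$ and then reads off the sharper bound $\norm{Z}\sb{H^1}=O(\epsilon^{2/k})$ from the fixed-point equation.
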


\begin{remark}
Similar asymptotics are established
in \cite{2008arXiv0812.2273G}
for the nonlinear Dirac equation in 3D.
These asymptotics are used in \cite{guan-gustafson-2010}
for the proof of linear instability
of solitary waves with $\omega\lesssim m$
to the nonlinear Dirac equation
with cubic nonlinearity.
\end{remark}

\begin{proof}
Since $U(y)>0$ for $y\in\R$, one concludes
from \eqref{fpp}
that $\lambda=0$
is the lowest eigenvalue of the operator
\[
\eur{H}\sb{-}
=
-\frac{1}{4}\p\sb y^2-\frac{k+2}{k+1}U^{k}+1.
\]
Taking the derivative of \eqref{fpp}, we find:
\begin{equation}\label{h1pf}
\eur{H}\sb{+} \p\sb y U
:=-\frac 1 4\p\sb y^3U-(k+2)U^{k}\p\sb y U+\p\sb y U
=0.
\end{equation}
The operator $\eur{H}\sb{+}$
in this relation
is the same as in \eqref{l-hat-1}.
With $\p\sb y U$ having one node,
we conclude that $\lambda=0$
is the second lowest eigenvalue of $\eur{H}\sb{+}$.

We define the function $X(y,\epsilon)$ by the relation
$\mathscr{X}(x,\omega)=\epsilon^{2/k}X(\epsilon x,\epsilon)$,
where $\epsilon$ and $\omega$ are related by
\eqref{epsilon-omega}.
Then \eqref{xpp} takes the form
\begin{equation}\label{xxpp}
-\frac 1 4\p\sb y^2 X+X-\frac{k+2}{k+1}
X^{k+1}
=\epsilon^{-2-\frac 2 k}O(\mathscr{X}^{k+2})
=\epsilon^{2/k}O(X^{k+2}).
\end{equation}
Subtracting \eqref{fpp} from \eqref{xxpp},
we find:
\[
-\frac 1 4
\p\sb y^2(X-U)+(X-U)
-\frac{k+2}{k+1}(X^{k+1}-U^{k+1})=\epsilon^{2/k}O(X^{k+2}),
\]
which we rewrite as
\[
-\frac 1 4
\p\sb y^2(X-U)+(X-U)
-(k+2)U^{k}(X-U)
=\epsilon^{2/k}O(X^{k+2})+O((X-U)^2).
\]
Thus,
denoting $Z(y,\epsilon)=X(y,\epsilon)-U(y)$,
one has
\begin{equation}\label{z-is-z}
Z
=\big(\eur{H}\sb{+}\at{L\sp 2\sb r}\big)^{-1}
\Big(
\epsilon^{2/k}O(U^{k+2})+\epsilon^{2/k}O(Z)+O(Z^2)
\Big).
\end{equation}
Above, $\eur{H}\sb{+}\at{L\sp 2\sb r}:L\sb r\sp 2(\R)\to L\sb r\sp 2(\R)$
is the restriction of $\eur{H}\sb{+}$ to the space
of spherically symmetric (even) functions;
its inverse is bounded
(from $L\sp 2\sb r$ to $H\sp 1\sb r$)
since by \eqref{h1pf}
we know that
$\ker \eur{H}\sb{+}$ is spanned by $\p\sb y U$,
which is skew-symmetric
(let us mention
that $\lambda=0$ is a simple eigenvalue of $\eur{H}\sb{+}$,
which is straightforward in one dimensional case).
The right-hand side of \eqref{z-is-z}
is well-defined since both $U$ and $Z=X-U$
are spherically symmetric.

For given $\epsilon>0$,
let
$\mathcal{B}\sb{\epsilon^{1/k}}
=\{Z\in H^1(\R)
\sothat Z(x)=Z(-x),\ \norm{Z}\sb{H\sp 1}<\epsilon^{1/k}\}
\subset H\sb r^1(\R)$
be the space
of symmetric functions with $\norm{Z}\sb{H\sp 1}\le\epsilon^{1/k}$.
Consider the map
\begin{equation}\label{z-to-z}
\mathcal{B}\sb{\epsilon^{1/k}}\to H\sp 1\sb r(\R),
\qquad
Z\mapsto
\big(\eur{H}\sb{+}\at{L\sp 2\sb r}\big)^{-1}
\Big(
\epsilon^{2/k}O(U^{k+2})+\epsilon^{2/k}O(Z)+O(Z^2)
\Big).
\end{equation}
We can choose
$\omega\sb 1\in(\omega\sb 0,m)$ so close to $m$,
that
for $\omega\in(\omega\sb 1,m)$
the value of
$\epsilon=\sqrt{m^2-\omega^2}$ is small enough
for the map \eqref{z-to-z} to be an endomorphism and a contraction
on $\mathcal{B}\sb{\epsilon^{1/k}}$;
by the contraction mapping theorem,
there is a unique stationary point
$Z\in\mathcal{B}\sb{\epsilon^{1/k}}$,
and, by \eqref{z-is-z},
$Z$
satisfies $\norm{Z}\sb{H\sp 1}=O(\epsilon^{2/k})$.
It follows that
\[
\norm{\mathscr{X}(x,\omega)-\epsilon^{2/k}U(\epsilon x)}\sb{L\sp\infty}
=\epsilon^{2/k}\norm{Z}\sb{L\sp\infty(\R)}
=O(\epsilon^{4/k}).
\]
Now the asymptotics of $\mathscr{Y}(x,\omega)$
could be determined from \eqref{xi-eta-system}
and then $v(x,\omega)$, $u(x,\omega)$
are determined by \eqref{def-xi-eta}.
\end{proof}

\section{Linear instability of small solitary waves}

\subsection*{Linearization at a solitary wave}

To derive the linearization of equation \eqref{nld-1d}
at a solitary wave \eqref{sw},
we consider the solution in the form of the Ansatz
\begin{equation}
\psi(x,t)=(\phi\sb\omega(x)+\rho(x,t))e^{-i\omega t},
\qquad
\phi\sb\omega(x)
=\begin{bmatrix}v(x,\omega)\\u(x,\omega)\end{bmatrix},
\qquad
\phi\sb\omega\in H\sp 1(\R,\C^2),
\end{equation}
where $\rho(x,t)\in\C^2$.
Note that
$v(x,\omega)$ and $u(x,\omega)$ are real-valued
by Lemma~\ref{lemma-existence-nld-1d}.
Then, by \eqref{nld-1d},
the linearized equation on $\rho$
takes the following form:
\begin{equation}\label{nld-1d-lin}
i\dot{\rho} =D\sb m\rho
-\omega\rho
-
f(\phi\sb\omega\sp\ast\beta\phi\sb\omega)\beta\rho
-(\phi\sb\omega\sp\ast\beta\rho +\rho\sp\ast\beta\phi\sb\omega)
f'(\phi\sb\omega\sp\ast\beta\phi\sb\omega)\beta\phi\sb\omega,
\qquad
D\sb m=-i\alpha\p\sb x+m\beta.
\end{equation}
We note that the above equation is $\R$-linear but not $\C$-linear, due
to the presence of the $\rho\sp\ast$ term.
We denote
$
\rho=\begin{bmatrix}
R\sb 1+iS\sb 1
\\
R\sb 2+iS\sb 2
\end{bmatrix},
$
with $R\sb{j}$, $S\sb{j}$ real-valued.
Since $v$ and $u$ are real-valued,
\[
\phi\sb\omega\sp\ast\beta\rho
+\rho\sp\ast\beta\phi\sb\omega
=2\Re
(\phi\sb\omega\sp\ast\beta\rho)
=2(v R\sb 1-u R\sb 2),
\]
we rewrite equation \eqref{nld-1d-lin}
in the following form
in terms of $R=\begin{bmatrix}R\sb 1\\R\sb 2\end{bmatrix}$,
$S=\begin{bmatrix}S\sb 1\\S\sb 2\end{bmatrix}$:
\begin{equation}\label{nld-1d-lin-2}
\p\sb t
\begin{bmatrix}R\\S\end{bmatrix}
= \eubJ\left\{
\begin{bmatrix}
D\sb m-\omega-f\beta & 0\\
0 &D\sb m-\omega-f\beta
\end{bmatrix}
\begin{bmatrix}R\\S\end{bmatrix}
-2\Re(\phi\sb\omega\sp\ast\beta\rho)
f'
\begin{bmatrix}\beta\phi\sb\omega\\0\end{bmatrix}
\right\},
\end{equation}
where
\[
f=f(\phi\sb\omega\sp\ast\beta\phi\sb\omega),
\qquad
f'=f'(\phi\sb\omega\sp\ast\beta\phi\sb\omega),
\]
and $\eubJ$ corresponds to $1/i$:
\[
\eubJ=
\begin{bmatrix}
0&I\sb 2\\-I\sb 2&0
\end{bmatrix},
\]
where $I\sb 2$ is the $2\times 2$ unit matrix.
We define $\eurL\sb{\pm}(\omega)$
and $\eubL(\omega)$
by the following:
\begin{equation}\label{def-dpm}
\eurL\sb{-}(\omega)
=
\begin{bmatrix}
m-\omega-f&\p\sb x
\\
-\p\sb x&-m-\omega+f
\end{bmatrix},
\qquad
\eurL\sb{+}(\omega)
=
\begin{bmatrix}
m-\omega-f-2f'v^2
&\p\sb x+2f'vu
\\
-\p\sb x+2f'vu&-m-\omega+f-2f'u^2
\end{bmatrix};
\end{equation}
\[
\eubL(\omega)
=\begin{bmatrix}\eurL\sb{+}(\omega)&0\\0&\eurL\sb{-}(\omega)\end{bmatrix}.
\]
Let us remind the reader that both $v$ and $u$
in \eqref{def-dpm}
depend on $\omega$.
Then equation \eqref{nld-1d-lin-2}
which describes the linearization at the solitary wave
$\phi\sb\omega e^{-i\omega t}$
takes the form
\[
\p\sb t
\begin{bmatrix}R\\S\end{bmatrix}
=
\eubJ\eubL(\omega)
\begin{bmatrix}R\\S\end{bmatrix}
=
\begin{bmatrix}0&\eurL\sb{-}(\omega)\\-\eurL\sb{+}(\omega)&0\end{bmatrix}
\begin{bmatrix}R\\S\end{bmatrix}.
\]

\begin{lemma}\label{lemma-hpm-2omega}
For any nonlinearity
$f(s)$
in \eqref{nld-1d},
the spectrum of the linearization at a solitary
wave
$\phi\sb\omega e^{-i\omega t}$
has the following properties:
\begin{enumerate}
\item
$
\sigma\sb{ess}(\eubJ\eubL(\omega))
=
i\R\backslash(-i(m-\omega),i(m-\omega));
$
\item
$\spec\sb{d}(\eubJ\eubL(\omega))\supset\{\pm 2\omega i;\,0\}$.
\end{enumerate}
\end{lemma}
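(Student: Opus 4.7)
The two claims are essentially independent, and I will treat them in turn.

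For claim (1), by Lemma~\ref{lemma-xx-ff} all the $x$-dependent coefficients that distinguish $\eubJ\eubL(\omega)$ from the constant-coefficient operator
\[
\eubJ\eubL\sb 0(\omega) := \begin{bmatrix} 0 & D\sb m-\omega \\ -(D\sb m-\omega) & 0 \end{bmatrix}
\]
(namely $f(\phi\sb\omega\sp\ast\beta\phi\sb\omega)$, $f'v\sp 2$, $f'vu$, $f'u\sp 2$) decay exponentially as $|x|\to\infty$. Multiplication by such matrices is relatively compact with respect to the first-order operator $\eubJ\eubL\sb 0(\omega)$ on $L\sp 2(\R,\C\sp 4)$, so Weyl's theorem yields $\sigma\sb{ess}(\eubJ\eubL(\omega)) = \sigma\sb{ess}(\eubJ\eubL\sb 0(\omega))$. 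The Fourier transform diagonalizes the latter: the symbol of $D\sb m-\omega$ is $\alpha\xi+m\beta-\omega I\sb 2$ with eigenvalues $\pm\sqrt{\xi\sp 2+m\sp 2}-\omega$, and an off-diagonal $4\times 4$ block matrix $\bigl[\begin{smallmatrix}0&A\\ -A&0\end{smallmatrix}\bigr]$ has eigenvalues $\pm i$ times those of its $2\times 2$ block $A$. Sweeping $\xi$ over $\R$ gives precisely $i\R\setminus(-i(m-\omega),i(m-\omega))$.

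For claim (2), the eigenvalue $\lambda=0$ is immediate from the stationary profile equation $\eurL\sb{-}(\omega)\phi\sb\omega = (D\sb m-\omega-f\beta)\phi\sb\omega=0$: this gives $(R,S)=(0,\phi\sb\omega)\in\ker\eubJ\eubL(\omega)$. For $\pm 2\omega i$ the key observation is that the Pauli matrix $\sigma\sb 1 = \bigl[\begin{smallmatrix}0&1\\1&0\end{smallmatrix}\bigr]$ anticommutes with both $\alpha=-\sigma\sb 2$ and $\beta=\sigma\sb 3$, and therefore with $D\sb m=-i\alpha\p\sb x+m\beta$. Setting $\zeta := \sigma\sb 1\phi\sb\omega = \bigl[\begin{smallmatrix}u\\v\end{smallmatrix}\bigr]$ and using $D\sb m\phi\sb\omega=\omega\phi\sb\omega+f\beta\phi\sb\omega$, I compute
\[
\eurL\sb{-}(\omega)\zeta = -\sigma\sb 1 D\sb m\phi\sb\omega - \omega\sigma\sb 1\phi\sb\omega + f\sigma\sb 1\beta\phi\sb\omega = -2\omega\zeta.
\]
A direct calculation gives $\phi\sb\omega\sp\ast\beta\zeta = vu - uv = 0$, and since the ``rank-one'' perturbation $\eurL\sb{+}-\eurL\sb{-}$ acts on $R$ by $R\mapsto -2f'(\phi\sb\omega\sp\ast\beta R)\beta\phi\sb\omega$, this vanishing implies $\eurL\sb{+}(\omega)\zeta = \eurL\sb{-}(\omega)\zeta = -2\omega\zeta$. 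Then $(R,S) = (\pm i\zeta,\,\zeta)$ satisfies $\eurL\sb{-}(\omega)S = -2\omega\zeta = (\pm 2i\omega)(\pm i\zeta) = (\pm 2i\omega)R$ and $-\eurL\sb{+}(\omega)R = \mp i\,\eurL\sb{+}(\omega)\zeta = \pm 2i\omega\,\zeta = (\pm 2i\omega)S$, so $\pm 2i\omega\in\sigma\sb p(\eubJ\eubL(\omega))$ with the $L\sp 2$ eigenvectors $(\pm i\sigma\sb 1\phi\sb\omega,\,\sigma\sb 1\phi\sb\omega)$.

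The only substantive analytic point is the relative compactness invoked in claim (1): since $\eubJ\eubL\sb 0(\omega)$ is not selfadjoint, one uses the version of Weyl's theorem for closed operators with relatively compact perturbations, the compactness itself following from the $L\sp 2$-boundedness of the resolvent of $\eubJ\eubL\sb 0(\omega)$ (computed on the Fourier side) combined with a standard Rellich-type argument applied to $V(x)\langle\p\sb x\rangle\sp{-1}$ for exponentially decaying $V$. Claim (2) is then a purely algebraic verification, the essential input being that the Clifford-anticommuting matrix $\sigma\sb 1$ formally intertwines the stationary profile equations at frequencies $\omega$ and $-\omega$.
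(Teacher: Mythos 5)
Your proof is correct; the paper itself gives no argument for this lemma (it only cites \cite{dirac-vk-arxiv}), and both of your steps --- Weyl's theorem for the relatively compact, exponentially decaying perturbation of the constant-coefficient operator, and the $\sigma\sb 1$-anticommutation trick producing $\eurL\sb{\pm}(\omega)\sigma\sb 1\phi\sb\omega=-2\omega\,\sigma\sb 1\phi\sb\omega$ --- are exactly the standard route taken in that reference. The only caveat is with the statement rather than your proof: for $\omega>m/3$ the points $\pm 2\omega i$ lie inside $\sigma\sb{ess}(\eubJ\eubL(\omega))=i\R\setminus(-i(m-\omega),i(m-\omega))$, so what you (correctly) establish is that they are embedded point eigenvalues, i.e.\ the inclusion should be read as $\sigma\sb p$ rather than $\sigma\sb d$ in the literal sense.
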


See \cite{dirac-vk-arxiv}.



\subsection*{Unstable eigenvalue of $\eubJ\eubL$ for $\omega\lesssim m$}

\begin{proposition}\label{prop-k3}
Let $k\ge 3$.
There is $\omega\sb 0<m$
such that
for $\omega\in(\omega\sb 0,m)$
there are two families of eigenvalues
\[
\pm\lambda\sb\omega\in\sigma\sb p(\eubJ\eubL(\omega)),
\qquad
\lambda\sb\omega>0,
\qquad
\lambda\sb\omega=O(\epsilon^2).
\]
\end{proposition}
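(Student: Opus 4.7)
The plan is to reduce the Dirac eigenvalue problem $\eubJ\eubL(\omega)\Psi=\lambda\Psi$, in the nonrelativistic limit $\omega\to m$, to a small perturbation of the Schr\"odinger-type eigenvalue problem $\eub{j}\hat{\bm{L}}\tilde\Psi=\tilde\Lambda\tilde\Psi$ of Section~\ref{sect-nls}, and then apply analytic perturbation theory to the simple real eigenvalues $\pm\Lambda$ supplied by Proposition~\ref{prop-a-hat}. First I set $\epsilon=\sqrt{m^2-\omega^2}$, rescale to $y=\epsilon x$, and look for eigenvalues of the form $\lambda=\epsilon^2\tilde\Lambda$. Writing the eigenvector as $(R\sb 1,R\sb 2,S\sb 1,S\sb 2)\sp T$, the solitary-wave asymptotics from Lemma~\ref{lemma-xx-ff} give $v=\epsilon^{1/k}U(\epsilon x)\sp{1/2}+O(\epsilon^{3/k})$ and $u=O(\epsilon^{1+1/k})$, hence $f(\phi\sb\omega\sp\ast\beta\phi\sb\omega)=\epsilon^2 U(\epsilon x)^k+O(\epsilon^{2+2/k})$. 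Inserting these into \eqref{def-dpm} shows that the $(1,1)$-diagonal entries of $\eurL\sb\pm(\omega)$ are $O(\epsilon^2)$ while the $(2,2)$-entries are $-2m+O(\epsilon^2)$, uniformly bounded away from zero, matching the usual heuristic that $u$ plays the role of the ``small'' component in a nonrelativistic spinor.

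Because the $(2,2)$-diagonal entries of $\eurL\sb\pm(\omega)$ are uniformly invertible, the two rows of the eigenvalue system involving them can be solved for $R\sb 2,S\sb 2$ in terms of $R\sb 1,S\sb 1$: at leading order $S\sb 2=-\p\sb x S\sb 1/(2m)+O(\epsilon^2)$ and $R\sb 2=-\p\sb x R\sb 1/(2m)+O(\epsilon^2)$. Substituting these back into the remaining two rows, passing to the rescaled variable $y=\epsilon x$, and dividing through by $\epsilon^2$ produces an operator pencil depending analytically on $\tilde\Lambda$ and $\epsilon$, whose $\epsilon\to 0$ limit equals a constant multiple of $\eub{j}\hat{\bm{L}}$ from \eqref{l-hat-1}. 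Indeed, the explicit potentials $\frac{k+1}{2\cosh^2 ky}$ and $\frac{(2k+1)(k+1)}{2\cosh^2 ky}$ emerge directly from the $f(\phi\sb\omega\sp\ast\beta\phi\sb\omega)$ and $f(\phi\sb\omega\sp\ast\beta\phi\sb\omega)+2f'(\phi\sb\omega\sp\ast\beta\phi\sb\omega)v^2$ terms evaluated on the leading-order profile, while the remainder is a bounded perturbation of size $O(\epsilon^{2/k})$ in Sobolev norms, uniformly in $\tilde\Lambda$ in a bounded disc.

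By Proposition~\ref{prop-a-hat}, for $k\ge 3$ the limit operator has a pair of simple real eigenvalues $\pm\Lambda$ ($\Lambda>0$) isolated from its essential spectrum, which lies on the imaginary axis. Analytic perturbation theory applied to the Riesz projector on the corresponding two-dimensional invariant subspace (equivalently, Rayleigh--Schr\"odinger perturbation theory) then yields, for all $\omega$ close enough to $m$, a pair of simple eigenvalues $\pm\tilde\Lambda(\epsilon)$ of the effective pencil with $\tilde\Lambda(\epsilon)\to\Lambda$ as $\epsilon\to 0$. These eigenvalues stay real because $\eubJ\eubL(\omega)$ has real coefficients and simple real eigenvalues of a real operator cannot leave the real axis without first colliding with another eigenvalue. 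Unscaling gives $\pm\lambda\sb\omega=\pm\epsilon^2\tilde\Lambda(\epsilon)\in\sigma\sb p(\eubJ\eubL(\omega))$ of the required order.

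The crux of the argument will be controlling the Schur-complement inversion uniformly in $(\omega,\lambda)$ and showing that the resulting pencil is jointly analytic near the target eigenvalue. Note that $\lambda\sb\omega=O(\epsilon^2)$ has the same order of magnitude as the distance $m-\omega=\epsilon^2/(m+\omega)$ to the edge of the essential spectrum of $\eubJ\eubL(\omega)$ (see Lemma~\ref{lemma-hpm-2omega}); however, the target eigenvalues are real while the essential spectrum is purely imaginary, so after dividing by $\epsilon^2$ the target sits at a fixed distance from every competing piece of spectrum, and the reduction goes through. The technical control needed is provided by the Schwartz-class decay of the unperturbed NLS eigenfunctions and the explicit soliton asymptotics in Lemma~\ref{lemma-xx-ff}.
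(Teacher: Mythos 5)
Your proposal is correct in outline and follows the same overall strategy as the paper: rescale by $\epsilon=\sqrt{m^2-\omega^2}$, identify the nonrelativistic limit of the linearization with the NLS linearization of Proposition~\ref{prop-a-hat}, and perturb its simple real eigenvalues $\pm\Lambda$. The execution differs in two places. First, you eliminate the small components $(R_2,S_2)$ by a Schur complement through the uniformly invertible mass block $\approx\mp 2m$, arriving at a $2\times 2$ problem that is a \emph{nonlinear} pencil in $\tilde\Lambda$; the paper instead rescales $R_2=\epsilon\hat R_2$, $S_2=\epsilon\hat S_2$ and keeps the first-order $4\times 4$ system, whose $\epsilon\to 0$ limit $\eub{A}_\Lambda$ is linear in the unknowns and whose kernel is matched to that of $\eub{j}\eub{H}-\Lambda$ via the purely algebraic second and fourth rows (Lemma~\ref{lemma-ker-a}). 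Second, for the perturbation step you invoke Kato-type analytic perturbation theory on the Riesz projector; this does not apply off the shelf here, because the reduced problem depends nonlinearly on $\tilde\Lambda$ and the perturbation is only $O(\epsilon^{2/k})$, hence not analytic in $\epsilon$. The paper replaces this by an explicit Lyapunov--Schmidt reduction: writing $\eta=\Phi_\Lambda+\zeta$ and $\mu=\lambda/\epsilon^2-\Lambda$, it closes a contraction mapping for $(\mu,\zeta)$ on a ball of radius $\epsilon^{1/k}$ in a weighted norm (Lemma~\ref{lemma-mz-contraction}); your sketch would need this, or an equivalent implicit-function/Rouch\'e argument on the reduced determinant, to be rigorous. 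The reality of the perturbed eigenvalue, which you deduce from the conjugation symmetry of a simple eigenvalue, the paper gets by posing the fixed-point problem on $\R\times L^2$ from the start. Both routes rest on the same two inputs: the soliton asymptotics of Lemma~\ref{lemma-xx-ff}, which give the $O(\epsilon^{2/k})$ size of the error term, and Proposition~\ref{prop-a-hat}. Neither difference affects correctness; the Schur-complement route is essentially that of \cite{guan-gustafson-2010} in 3D, while the paper's keeps all estimates linear at the cost of working in $\C^4$.
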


\begin{proof}
In the explicit form, the relation
$
\begin{bmatrix}0&\eurL\sb{-}\\-\eurL\sb{+}&0\end{bmatrix}
\begin{bmatrix}R\\S\end{bmatrix}
=
\lambda
\begin{bmatrix}R\\S\end{bmatrix}
$
can be written as follows:
\begin{equation}
\begin{bmatrix}
-\lambda&0&g-\omega&\p\sb x
\\
0&-\lambda&-\p\sb x&-g-\omega
\\
-m+\omega+f+2f'v^2&-\p\sb x-2f'vu&-\lambda&0
\\
\p\sb x-2f'vu&m+\omega-f+2f'u^2&0&-\lambda
\end{bmatrix}
\begin{bmatrix}R\sb 1\\R\sb 2\\S\sb 1\\S\sb 2\end{bmatrix}
=0.
\end{equation}
Dividing the first and the third rows by $\epsilon^2$,
the second and the fourth rows by $\epsilon$,
and substituting
$y=\epsilon x$,
$R\sb 2=\epsilon\hat R\sb 2$,
$S\sb 2=\epsilon\hat S\sb 2$,
we get
\begin{equation}\label{lhpm3}
\begin{bmatrix}
-\frac{\lambda}{\epsilon^2}&0&\frac{m-\omega-f}{\epsilon^2}&\frac{1}{\epsilon}\p\sb y
\\
0&-\frac{\lambda}{\epsilon}&-\p\sb y&\frac{1}{\epsilon}(-m-\omega+f)
\\
\frac{-m+\omega+f+2f'v^2}{\epsilon^2}&-\frac{1}{\epsilon}\p\sb y-\frac{2f'vu}{\epsilon^2}&-\frac{\lambda}{\epsilon^2}&0
\\
\p\sb y-\frac{1}{\epsilon}2f'vu&\frac{1}{\epsilon}(m+\omega-f+2f'u^2)&0&-\frac{\lambda}{\epsilon}
\end{bmatrix}
\begin{bmatrix}R\sb 1\\\epsilon\hat R\sb 2\\S\sb 1\\\epsilon\hat S\sb 2\end{bmatrix}
=0,
\end{equation}
which simplifies to
\begin{equation}\label{lhpm4}
\begin{bmatrix}
-\frac{\lambda}{\epsilon^2}&0&\frac{m-\omega-f}{\epsilon^2}&\p\sb y
\\
0&-\lambda&-\p\sb y&-m-\omega+f
\\
\frac{-m+\omega+f+2f'v^2}{\epsilon^2}&-\p\sb y-\frac{2f'vu}{\epsilon}&-\frac{\lambda}{\epsilon^2}&0
\\
\p\sb y-\frac{1}{\epsilon}2f'vu&m+\omega-f+2f'u^2&0&-\lambda
\end{bmatrix}
\begin{bmatrix}R\sb 1\\\hat R\sb 2\\S\sb 1\\\hat S\sb 2\end{bmatrix}
=0.
\end{equation}
Let $\Lambda=\lim\limits\sb{\epsilon\to 0}\frac{\lambda}{\epsilon^2}$.
We introduce the matrices
\begin{equation}\label{def-akk}
\eub{A}\sb\Lambda=\begin{bmatrix}
-\Lambda
&0&\frac{1}{2}-U^k&\p\sb y
\\
0&0&-\p\sb y&-2
\\
-\frac 1 2 +(2k+1)U^k
&-\p\sb y&-\Lambda&0
\\
\p\sb y&2&0&0
\end{bmatrix},
\qquad
\eub{K}\sb 1=\mathop{\rm diag}[1,0,1,0],
\qquad
\eub{K}\sb 2=\mathop{\rm diag}[0,1,0,1].
\end{equation}
Above,
\begin{equation}\label{def-vk-0}
U(y)
=
\Big(\frac{k+1}{2\cosh^2 ky}\Big)^{1/k};
\end{equation}
cf. \eqref{def-vk}.
Noting that one has
$\mathscr{X}(x)
=\epsilon^{\frac{2}{k}}U(\epsilon x)+O(\epsilon^{\frac{4}{k}})$
by Lemma~\ref{lemma-xx-ff},
we can write
\eqref{lhpm4} in the form
\begin{equation}\label{lhpm5}
\eub{A}\sb\Lambda
\eta
=
\Big(\frac{\lambda}{\epsilon^2}-\Lambda\Big)\eub{K}\sb 1\eta
+\lambda \eub{K}\sb 2\eta
+W\eta,
\end{equation}
where
$
\eta=\begin{bmatrix}R\sb 1\\\hat R\sb 2\\S\sb 1\\\hat S\sb 2\end{bmatrix}
\in\C^4
$
and
\begin{equation}\label{def-ww}
W(y,\epsilon)=
\eub{A}\sb\Lambda
-
\begin{bmatrix}
-\frac{\lambda}{\epsilon^2}&0&\frac{m-\omega-f}{\epsilon^2}&\p\sb y
\\
0&-\lambda&-\p\sb y&-m-\omega+f
\\
\frac{-m+\omega+f+2f'v^2}{\epsilon^2}&-\p\sb y-\frac{2f'vu}{\epsilon}&-\frac{\lambda}{\epsilon^2}&0
\\
\p\sb y-\frac{2f'vu}{\epsilon}&m+\omega-f+2f'u^2&0&-\lambda
\end{bmatrix}
-\Big(\frac{\lambda}{\epsilon^2}-\Lambda\Big)\eub{K}\sb 1
-\lambda \eub{K}\sb 2
\end{equation}
is the zero order differential operator with
$L\sp\infty$ coefficients.

\begin{lemma}\label{lemma-bounds-w}
$
\norm{W(\cdot,\epsilon)}\sb{L\sp\infty(\R,\C^4\to\C^4)}\le O(\epsilon^{2/k}).
$
\end{lemma}

\begin{proof}
By Lemma~\ref{lemma-xx-ff},
one has
\[
v(x,\omega)
=
\epsilon^{\frac 1 k}(U(\epsilon x))^{\frac 1 2}
+
O(\epsilon^{\frac 3 k}),
\qquad
\norm{u(\cdot,\omega)}\sb{L\sp\infty}=O(\epsilon^{1+\frac 1 k}).
\]
Then
\[
f(v^2-u^2)
=v^{2k}+O(\epsilon^{2+\frac 2 k})
=\epsilon^{2}U(\epsilon x)^k+O(\epsilon^{2+\frac 2 k}),
\]
\[
f'(v^2-u^2)=kv^{2k-2}+O(\epsilon^{2})=O(\epsilon^{2-\frac 2 k}),
\]
\[
f'(v^2-u^2)v^2
=
\epsilon^2 k U(\epsilon x)^k
+O(\epsilon^{2+\frac 2 k}),
\]
\[
f'(v^2-u^2)v u=O(\epsilon^{3}).
\]
Now the proof follows
from the definition of $\eub{A}\sb\Lambda$
and $\eub{K}\sb 1$, $\eub{K}\sb 2$
in \eqref{def-akk}.
\end{proof}

\begin{lemma}\label{lemma-ker-a}
$\dim\ker \eub{A}\sb\Lambda>0$ if and only if $\Lambda$
is an eigenvalue of the operator
\[
\eub{j}\eub{H}=\begin{bmatrix}
0&-\frac 1 2\p\sb y^2+\frac 1 2-U^k
\\
\frac 1 2\p\sb y^2-\frac 1 2+(2k+1)U^k
&0
\end{bmatrix}.
\]
When $k\ge 3$,
there is $\Lambda>0$
such that
$\pm\Lambda\in\sigma\sb{d}(\eub{j}\eub{H})$.
\end{lemma}

\begin{proof}
The first statement of the Lemma
follows from the structure of the second and the fourth
rows of $\eub{A}\sb\Lambda$.
The second statement follows from
Proposition~\ref{prop-a-hat}.
\end{proof}

By Lemma~\ref{lemma-ker-a},
if $k\ge 3$,
then there is $\Lambda>0$
such that
$\pm\Lambda\in\sigma\sb{d}(\eub{j}\eub{H})$.
Now we will use the
Rayleigh-Schr\"odinger perturbation theory
to show that
there are $\lambda\in\sigma\sb{d}(\eubJ\eubL)$
with $\lambda\approx\pm\epsilon^2\Lambda$.
Let $\Phi\sb\Lambda\in\ker \eub{A}\sb\Lambda$
with $\norm{\Phi\sb\Lambda}\sb{L\sp 2}=1$,
and
let $\eub{P}\sb\Lambda:\;L\sp 2(\R,\C^4)\to H\sp\infty(\R,\C^4)$
be the spectral projector
onto the corresponding eigenspace.
Coupling \eqref{lhpm5}
with $\eubJ\Phi\sb\Lambda$
and projecting \eqref{lhpm5}
onto $\range(1-\eub{P}\sb\Lambda)$,
one has:
\begin{equation}\label{eq1}
0=\langle \eubJ\Phi\sb\Lambda,\eub{A}\sb\Lambda\eta\rangle
=
\Big(\frac{\lambda}{\epsilon^2}-\Lambda\Big)
\langle \eubJ\Phi\sb\Lambda,\eub{K}\sb 1\eta\rangle
+
\lambda\langle \eubJ\Phi\sb\Lambda,\eub{K}\sb 2\eta\rangle
+\langle \eubJ\Phi\sb\Lambda,W\eta\rangle,
\end{equation}
\begin{equation}\label{eq2}
(1-\eub{P}\sb\Lambda)\eta=
\eub{A}\sb\Lambda^{-1}(1-\eub{P}\sb\Lambda)
\Big(
\Big(\frac{\lambda}{\epsilon^2}-\Lambda\Big)\eub{K}\sb 1
+\lambda \eub{K}\sb 2
+W
\Big)\eta.
\end{equation}
We will find $\eta$ in the form
$\eta=\Phi\sb\Lambda+\zeta$,
with $\zeta\in\range(1-\eub{P}\sb\Lambda)$.
Denote
$\mu=\frac{\lambda}{\epsilon^2}-\Lambda$.
Then the relations \eqref{eq1} and \eqref{eq2}
can be written as
\begin{equation}\label{eq1a}
0=
\mu
\langle \eubJ\Phi\sb\Lambda,\eub{K}\sb 1\Phi\sb\Lambda\rangle
+
\mu
\langle \eubJ\Phi\sb\Lambda,\eub{K}\sb 1\zeta\rangle
+
\epsilon^2
(\Lambda+\mu)
\langle \eubJ\Phi\sb\Lambda,\eub{K}\sb 2(\Phi\sb\Lambda+\zeta)\rangle
+
\langle \eubJ\Phi\sb\Lambda,W(\Phi\sb\Lambda+\zeta)\rangle,
\end{equation}
\begin{equation}\label{eq2a}
\zeta=
\eub{A}\sb\Lambda^{-1}(1-\eub{P}\sb\Lambda)
\Big(
\mu \eub{K}\sb 1
+\epsilon^2(\Lambda+\mu)\eub{K}\sb 2
+W
\Big)(\Phi\sb\Lambda+\zeta).
\end{equation}
Let us note that
$\langle \eubJ\Phi\sb\Lambda,\eub{K}\sb 1\Phi\sb\Lambda\rangle\ne 0$.
The equations
\eqref{eq1a}, \eqref{eq2a}
could be written as
$
\mu=M(\mu,\zeta),
$
$
\zeta=Z(\mu,\zeta),
$
with functions
$
M:\R\times L^2(\R,\C^4)\to\R,
$
$
Z:\R\times L^2(\R,\C^4)\to L^2(\R,\C^4)
$
given by
\begin{equation}\label{def-map-m}
M(\mu,\zeta)=
-\frac{1}
{\langle \eubJ\Phi\sb\Lambda,\eub{K}\sb 1\Phi\sb\Lambda\rangle}
\Big[
\mu
\langle \eubJ\Phi\sb\Lambda,\eub{K}\sb 1\zeta\rangle
+
\epsilon^2
(\Lambda+\mu)
\langle\eubJ\Phi\sb\Lambda,\eub{K}\sb 2(\Phi\sb\Lambda+\zeta)\rangle
+
\langle\eubJ\Phi\sb\Lambda,W(\Phi\sb\Lambda+\zeta)\rangle
\Big],
\end{equation}
\begin{equation}\label{def-map-z}
Z(\mu,\zeta)=
\eub{A}\sb\Lambda^{-1}(1-\eub{P}\sb\Lambda)
\Big(
\mu \eub{K}\sb 1
+\epsilon^2(\Lambda+\mu)\eub{K}\sb 2
+W
\Big)(\Phi\sb\Lambda+\zeta).
\end{equation}

Pick $\Gamma\ge 1$
such that
\begin{equation}\label{def-gamma}
\Gamma
\ge 2\norm{\eub{A}\sb\Lambda^{-1}(1-\eub{P}\sb\Lambda)\eub{K}\sb 1\Phi\sb\Lambda}\sb{L\sp 2}.
\end{equation}

\begin{lemma}\label{lemma-mz-contraction}
Consider $\R\times L\sp 2(\R,\C^4)$
endowed with the metric
\[
\norm{(\mu,\zeta)}\sb{\Gamma}
=\Gamma\abs{\mu}+\norm{\zeta}\sb{L\sp 2(\R,\C^4)}.
\]
There is $\omega\sb 1\in(\omega\sb 0,m)$
such that
for $\omega\in(\omega\sb 1,m)$
the map
\begin{equation}\label{def-m-z}
M\times Z:\;\;\R\times L\sp 2(\R,\C^4)\to\R\times L\sp 2(\R,\C^4),
\qquad
(\mu,\zeta)\mapsto\big(M(\mu,\zeta),Z(\mu,\zeta)\big),
\end{equation}
restricted onto the set
\[
\mathcal{B}\sb{\epsilon^{1/k}}
=\{(\mu,\zeta)\in\R\times L\sp 2(\R,\C^4)
\sothat\norm{(\mu,\zeta)}\sb{\Gamma}\le\epsilon^{1/k}\}
\subset\R\times L\sp 2(\R,\C^4)
\]
is an endomorphism and a contraction
with respect to $\norm{\cdot}\sb{\Gamma}$.
\end{lemma}

\begin{proof}
The proof follows from the definitions
\eqref{def-map-m}, \eqref{def-map-z}.
We needed to introduce the factor $\Gamma$
into the definition of the metric
to make sure that the contribution
of the term
$\eub{A}\sb\Lambda^{-1}(1-\eub{P}\sb\Lambda)\mu \eub{K}\sb 1\Phi\sb\Lambda$
from \eqref{def-map-z}
into $\norm{(M(\mu,\zeta),Z(\mu,\zeta))}\sb{\Gamma}$
is bounded by
$\frac \Gamma 2\abs{\mu}\le\frac 1 2\norm{(\mu,\zeta)}\sb{\Gamma}$.
The contribution
of all other terms
is bounded by $\frac 1 4\norm{(\mu,\zeta)}\sb{\Gamma}
+O(\epsilon^{2/k})$,
placing $(M(\mu,\zeta),Z(\mu,\zeta))$
inside $\mathcal{B}\sb{\epsilon^{1/k}}$
whenever $(\mu,\zeta)\in\mathcal{B}\sb{\epsilon^{1/k}}$,
as long as $\omega\sb 1$ is sufficiently close to $m$,
so that $\epsilon=\sqrt{m^2-\omega^2}$ is sufficiently small.
The contribution $O(\epsilon^{2/k})$
comes from the terms with $W\Phi\sb\Lambda$,
due to the bound
$\norm{W(\cdot,\epsilon)}\sb{L\sp\infty(\R,\C^4\to\C^4)}=O(\epsilon^{2/k})$
established in Lemma~\ref{lemma-bounds-w}.

The contribution
of the term
$\eub{A}\sb\Lambda^{-1}(1-\eub{P}\sb\Lambda)\mu \eub{K}\sb 1\Phi\sb\Lambda$
into the norm $\norm{Z(\mu,\zeta)-Z(\mu',\zeta')}\sb{L\sp 2}$
is bounded by $\frac 1 2 \Gamma\abs{\mu-\mu'}
\le\frac 1 2\norm{(\mu-\mu',\zeta-\zeta')}\sb{\Gamma}$.
The contribution of all other terms
from
\eqref{def-map-m}, \eqref{def-map-z}
into
$\norm{(M(\mu,\zeta)-M(\mu',\zeta'),\, Z(\mu,\zeta)-Z(\mu',\zeta'))}\sb{\Gamma}$
could be made smaller than
$\frac 1 4\norm{(\mu-\mu',\zeta-\zeta')}\sb{\Gamma}$
by choosing
$\omega\sb 1$ sufficiently close to $m$,
so that $\epsilon=\sqrt{m^2-\omega^2}$ is sufficiently small.
It follows that
$(M\times Z)\at{\mathcal{B}\sb{\epsilon^{1/k}}}$
is a contraction
in the metric $\norm{\cdot}\sb{\Gamma}$.
\end{proof}

According to Lemma~\ref{lemma-mz-contraction},
by the contraction mapping theorem,
for $\omega\in(\omega\sb 1,m)$,
the map
\eqref{def-m-z}
has a fixed point
\[
(\mu\sb 0(\omega),\zeta\sb 0(\omega))
\in
\mathcal{B}\sb{\epsilon^{1/k}}
\subset\R\times L^2(\R,\C^4).
\]
Thus, we have
\[
\pm\epsilon^2(\Lambda+\mu\sb 0(\omega))
\in\sigma\sb p(\eubJ\eubL(\omega)),
\qquad
\omega\in(\omega\sb 1,m),
\]
with $\Gamma\abs{\mu\sb 0(\omega)}\le\epsilon^{1/k}$,
finishing the proof of the proposition.
\end{proof}

By Remark~\ref{remark-enough},
Proposition~\ref{prop-k3}
finishes the proof of Theorem~\ref{main-theorem-dirac-vbk}.


\newcommand{\etalchar}[1]{$^{#1}$}
\def\cprime{$'$} \def\cprime{$'$} \def\cprime{$'$} \def\cprime{$'$}
  \def\cprime{$'$} \def\cprime{$'$} \def\cprime{$'$} \def\cprime{$'$}
  \def\cprime{$'$} \def\cydot{\leavevmode\raise.4ex\hbox{.}} \def\cprime{$'$}
  \def\cydot{\leavevmode\raise.4ex\hbox{.}} \def\cprime{$'$} \def\cprime{$'$}
  \def\cprime{$'$} \def\cprime{$'$}

\end{document}